\theoremstyle{plain}
\newtheorem{theo}{Theorem}[section]
\newtheorem{cor}[theo]{Corollary}
\newtheorem{lem}[theo]{Lemma}
\newtheorem{prop}[theo]{Proposition}
\theoremstyle{definition}
\newtheorem{example}[theo]{Example}
\newtheorem{definition}[theo]{Definition}
\newtheorem{lemma}[theo]{Lemma}
\newenvironment{renumerate}
{
\begin{enumerate}}
{\end{enumerate}
}
\newenvironment{remark}
{\vskip6pt \noindent {\it Remark:}} {\vskip6pt}
\newenvironment{ex}[1]%
{\begin{example}\label{#1}}%
{\end{example}}
\newcommand{\map}{\Psi}
\newcommand{\C}{\text{$\mathbb C$}}
\renewcommand{\frak}[1]{\text{$\mathfrak{#1}$}}
\newcommand{\J}{\text{$\mathcal{J}$}}
\newcommand{\G}{\text{$\mathcal{G}$}}
\newcommand{\ga}{\text{$\alpha$}}
\renewcommand{\gg}{\text{$\gamma$}}
\newcommand{\gh}{\text{$\lambda$}}
\newcommand{\gf}{\text{$\varphi$}}
\newcommand{\Id}{\mathrm{Id}}
\newcommand{\del}{\text{$\partial$}}
\newcommand{\delbar}{\text{$\overline{\partial}$}}
\newcommand{\tensor}{\otimes}
\newcommand{\mc}[1]{\text{$\mathcal{#1}$}}
\newcommand{\into}{\longrightarrow}
\newcommand{\noqed}{\let\qed\relax}
\newcommand{\Gg}{\mathfrak{g}}
\newcommand{\Aa}{\mathfrak{a}}
\newcommand{\Hh}{\mathfrak{h}}
\newcommand{\gcs}{generalized complex structure}
\newcommand{\gcss}{generalized complex structures}
\newcommand{\gk}{generalized K\"ahler}
\newcommand{\gks}{generalized K\"ahler structure}
\newcommand{\wrt}{with respect to}
\renewcommand{\iff}{if and only if}
\newcommand{\Ann}{\mathrm{Ann}}
\newcommand{\Kperp}{\text{$K^{\perp}$}}
\newcommand{\RR}{\mathbb R}
\newcommand{\lra}{\longrightarrow}
\newcommand{\dd}{\mathcal{D}}
\newcommand{\End}{\mathrm{End}}
\newcommand{\IP}[1]{\langle #1\rangle}
\newcommand{\IPP}[1]{\mathrm{tr}( #1)}
\newcommand{\E}{\mathcal{E}}
\newcommand{\ad}{\operatorname{ad}\!}
\newcommand{\Cour}[1]{[\![#1]\!]}
\begin{document}

\title{Reduction of metric structures on Courant algebroids}
\author{Gil R. Cavalcanti \thanks{\tt gilrc@maths.ox.ac.uk}\\ Mathematical Institute,\\ St Giles 24 -- 29\\  Oxford, OX1 3LB, UK.\\[0.1cm]} 
\date{}
\maketitle

\abstract{
We use the procedure of reduction of Courant algebroids introduced in \cite{BCG05} to reduce strong KT, hyper KT and generalized K\"ahler structures on Courant algebroids. This allows us to recover results from the literature as well as explain from a different angle some of the features observed there in.  As an example, we prove that the moduli space of instantons of a bundle over a SKT/HKT/generalized K\"ahler manifold is endowed with the same type of structure as the original manifold.}

\section*{Introduction}

The idea of describing an infinitesimal  group action on a manifold not only by vector fields but also by 1-forms appeared in some instances in the literature. For example, in the context of gauging the Wess--Zumino term of a sigma model \cite{FiSt94} or, in a more geometric way, in the work of Grantcharov {\it et al} \cite{GPP02}, where they study the reduction of strong KT and hyper KT structures. In those cases,  at first,  the 1-forms do not seem to influence the group action, which is just the one generated by the vector fields.
However, the 1-form part of the action has some significance. For example, in the reduction procedure in \cite{GPP02} the strong/hyper KT metric and corresponding complex structures are not modeled on the orthogonal complement of the $G$-orbits, but on a certain space transversal to the group directions which is slanted away from the orthogonal complement by the 1-form part of the action. 

The feature that the model for the tangent space of the quotient is not given by the orthogonal complement to the group directions appeared before in contexts where the presence of a 1-form part of the action was otherwise unnoticed. For example, it appeared in the work of L\"ubke and Teleman \cite{LuTe95}, where they show that the moduli space of instantons of a bundle over an SKT manifold has an SKT structure. More recently this trait resurfaced in the work of Hitchin \cite{Hi05}, where he studies a \gk\ structure on the moduli space of intantons, only there he has to consider two different transversal spaces modeling different structures on the quotient.

In these examples, the need for 1-forms in the action emanates from the presence of a background closed 3-form $H$ --- in \cite{FiSt94} it is the NS-flux, while in the SKT/HKT case it is $d^c\omega$, where $\omega = g(I\cdot, \cdot)$ is the K\"ahler form.
The presence of this closed 3-form suggests that the phenomena above  are related to Courant algebroids.

In this paper we explain the idiosyncrasies pointed out above in terms of extended actions and reduction of Courant algebroids, a procedure introduced in \cite{BCG05} which makes rigorous the idea of an action by vectors and forms and shows how the 1-form part of the action enters as a main ingredient when constructing  the reduced Courant algebroid. In doing so, we also provide a description of the \gk\ structure on the moduli space of instatons found by Hitchin in terms of a \gk\ reduction, answering a question he posed in \cite{Hi05} (this generalized K\"ahler reduction is studied in detail in \cite{BCG07b}).

One of the basic ideas behind this reduction procedure is that one phrases the concept of an infinitesimal $G$-action not only in terms of vector fields, but in terms of an {\it extended action} on the Courant algebroid $TM \oplus T^*M$ generated by vector fields and 1-forms. The vector fields and 1-forms generating the action on the Courant algebroid are required to  be compatible with the underlying infinitesimal action of the vector fields on $M$.

One advancement of this approach is that one can consider the action of a 1-form $\xi$  by itself, without an accompanying vector part. In that case, the compatibility condition  implies that $\xi$ is closed and the reduction procedure consists of taking a maximal leaf $M^{red}$ of the distribution $\Ann(\xi)$ and just restrict the Courant algebroid $TM \oplus T^*M$ to $M^{red}$, obtaining $TM^{red}\oplus T^*M^{red}$. In the general case, an extended action  is given partially by the action of closed forms and partially by a group action, so the effect of the reduction procedure is to take the quotient of a submanifold of $M$ by the action of the group, much in the spirit of symplectic reduction.

In the particular case when the part of the extended action determined by forms alone is given by $d\mu$, with $\mu:M\into \Hh^*$  an equivariant function with values on a $\frak{g}$-module $\Hh^*$, we say that $\mu$ is a {\it moment map} for the action. In this case, reduced manifolds are given by quotients of  inverse images of $G$-orbits in $\Hh^*$ by the action of $G$ on $M$. Of course, the basic motivation for this definition comes from symplectic geometry, as their moment maps are also moment maps in our setting, but our definition is considerably more general. Particular features of our moment map are that it is independent of any geometry on $TM\oplus T^*M$ or on the underlying manifold and also, unlike symplectic moment maps, it is not determined by the group action on the manifold, but it is just part of the extended action. Also, it can have values in any $\frak{g}$-module.

The purpose of this paper is twofold. First we show that our procedure can be used to reduce  SKT and HTK structures, recovering results from \cite{GPP02} and \cite{LuTe95} from a different perspective. From our angle, the reduced structures are always modeled on the orthogonal complement of the group orbits {\it on the Courant algebroid}. Then, we explain the need to look at different spaces, transversal but not orthogonal to the $G$-orbit   {\it on the manifold},  from the fact that the projection $TM \oplus T^*M \into TM$ is not orthogonal.

The second aim of the paper is to provide a concrete example where many of the features of our reduction  procedure are present and, when set up properly, simplify and unify results previously found.  We start  by studying reduction of the strong KT structures on the space of metric connections of a bundle over an SKT manifold.  We write down an explicit  action on the space of connections which is related to the gauge action, but  which involves the action of vectors and forms and has a moment map with values on a $\Gg$-module bigger than $\Gg^*$, namely, $\mu(A) = F_A^+$, the self dual part of the curvature of the connection $A$. Then we  show that the SKT structure can be reduced by this action endowing $\mu^{-1}(0)/G$, i .e., the moduli space of instantons, with an SKT structure. 

Although we focus initially on strong KT reduction, the unifying property of this construction manifests itself on the fact that the computation we do here, in a single sweep,  proves that the moduli space of instantons has a strong KT/hyper KT/generalized K\"ahler structure as long as the starting 4-manifold has the same structure (see also \cite{BCG07} for more details on the \gk\ reduction). There is no need to change the action or the moment map to deal with the different structures. 

This paper is organized as follows. In Section \ref{exact courant algebroids} we introduce exact Courant algebroids. In section \ref{sec:actions} we  review the results from \cite{BCG05} on reduction of Courant algebroids. In Section \ref{sec:SKT} we introduce the generalized metric, strong KT, hyper KT and generalized K\"ahler structures on Courant algebroids. In Section \ref{sec:SKT reduction} we study how to reduce generalized metrics, strong KT and hyper KT and \gk\ structures in the presence of an extended action. There we also give an interpretation of the reduction procedure in terms of the geometry of the tangent bundle, showing that our methods agree with and explain some features of other reduction procedures found in the literature. Then in Section \ref{sec:examples} we work out in detail how to obtain the SKT structure on the moduli space of instantons over an SKT manifold. We then show how that the same method can be used to perform /HKT and \gk\ reduction.

I would like to thank Henrique Bursztyn, Marco Gualtieri, Nigel Hitchin and Oliver Nash for many useful conversations and suggestions. This research is suppported by EPSRC.

\section{Exact Courant algebroids}\label{exact courant algebroids}
A {\it Courant algebroid} over a manifold $M$ is a vector bundle $\E
\to M$ equipped with a fibrewise nondegenerate symmetric bilinear
form $\IP{\cdot,\cdot}$, a bilinear bracket $\Cour{\cdot,\cdot}$ on
the smooth sections $\Gamma(\E)$, and a bundle map $\pi: \E\to TM$, such that for all
$e_1,e_2,e_3\in \Gamma(\E)$ and $f\in C^{\infty}(M)$:
\begin{itemize}
\item[C1)] $\Cour{e_1,\Cour{e_2,e_3}} = \Cour{\Cour{e_1,e_2},e_3} +
\Cour{e_2,\Cour{e_1,e_3}}$,
\item[C2)] $\pi(\Cour{e_1,e_2})=[\pi(e_1),\pi(e_2)]$,
\item[C3)] $\Cour{e_1,fe_2}=f\Cour{e_1,e_2}+ (\pi(e_1)f) e_2$,
\item[C4)] ${\pi(e_1)}\IP{e_2,e_3}= \IP{\Cour{e_1,e_2},e_3}
+ \IP{e_2, \Cour{e_1, e_3}}$,
\item[C5)] $\Cour{e_1,e_1} = \dd\IP{e_1,e_1}$,
\end{itemize}
where, using $\IP{\cdot,\cdot}$ to identify $\E$ with $\E^*$,  we
obtain a bundle map $\pi^* :T^*M \into \E$ dual to $\pi$ and then define $\dd =\tfrac{1}{2}\pi^*\circ d: C^{\infty}(M)\lra \Gamma(\E)$.

Note that C2) and C4) imply that $\pi \circ \pi^*=0$, while $C5)$ implies that the bracket is not skew-symmetric, but rather satisfies
$$
\Cour{e_1,e_2}=-\Cour{e_2,e_1}+2\mathcal{D}\IP{e_1,e_2}.
$$
Since the left adjoint action is a derivation of the bracket (axiom
$C1)$), the pair $(\Gamma(\E),\Cour{\cdot,\cdot})$ is a \emph{Leibniz
algebra}~\cite{Lod93}. 

\begin{definition} A Courant algebroid is {\it exact} if the sequence
\begin{equation}\label{exact}
0 \longrightarrow T^*M \stackrel{\pi^*}{\longrightarrow} \E \stackrel{\pi}{\longrightarrow}
TM \longrightarrow 0
\end{equation}
is exact.
\end{definition}

Given an exact Courant algebroid, we may always choose a right
splitting $\nabla: TM\to \E$ which is \emph{isotropic}, i.e. whose
image in $\E$ is isotropic with respect to $\IP{\cdot,\cdot}$. Such
a splitting has a {\it curvature}: a closed  3-form $H$ defined by
\begin{equation}\label{eq:curv}
H(X,Y,Z) = \IP{\Cour{\nabla(X),\nabla (Y)},\nabla(Z)} \mbox{ for } X,Y,Z\in \Gamma(TM)
\end{equation}
 Using the bundle isomorphism $\nabla+\tfrac{1}{2}\pi^*:TM\oplus T^*M \to \E$,
we transport the Courant algebroid structure onto $TM\oplus T^*M$.
Given $X+\xi,Y+\eta\in \Gamma(TM\oplus TM^*)$, we obtain for the
bilinear pairing
\begin{equation}\label{eq:pairing}
\IP{X+\xi, Y+\eta} = \frac{1}{2}(\eta(X) + \xi(Y)),
\end{equation}
and the bracket becomes
\begin{equation}\label{eq:Hcour}
\Cour{X+\xi, Y+\eta}_H= [X,Y] + \mc{L}_X \eta - i_Y d\xi  + i_Y i_XH,
\end{equation}
called the {\it $H$-Courant bracket} on $TM\oplus T^*M$ \cite{SeWe01} or just the {\it Courant bracket} when the 3-form $H$ is clear from the context.
The bundle map $\pi$ is the projection
\begin{equation}
\pi: TM\oplus T^*M \to TM.
\end{equation}

As observed by \v{S}evera, the choice of a different isotropic splitting of
\eqref{exact} modifies $H$ by an exact 3-form, so the cohomology class $[H]\in H^3(M,\mathbb{R})$
is independent of the splitting and determines the exact Courant algebroid structure on $\E$
completely. We call $[H]$ the  {\it characteristic class} of the Courant algebroid.

\section{Reduction of Courant algebroids}\label{sec:actions}

In this section we review the theory from \cite{BCG05} on extended actions and reduction of Courant algebroids. We also go through some of the examples of Courant algebras and extended actions given in that paper  which will be useful in the next sections.

As we mentioned in Section \ref{exact courant algebroids}, the adjoint action is a symmetry of the bracket and of the pairing and hence represents the natural concept of Lie derivative on an exact Courant algebroid $\mc{E}$. This allows us to regard a section $e$ of $\mc{E}$ as an element of the algebra $\frak{c}$ of  infinitesimal symmetries of $\mc{E}$ therefore giving a map $\ad:\Gamma(\mc{E}) \into \frak{c}$. This map preserves brackets but it is neither injective nor surjective, for example, it is clear from equation \eqref{eq:Hcour} that $\ker(\map) = \Omega^1_{cl}(M)$, the space of closed 1-forms on $M$.

Similarly to the case of a Lie group action, an infinitesimal action on an exact Courant algebroid is described in terms of  sections of $\E$, using the argument above to identify them with symmetries.  However, since the Courant bracket in $\Gamma(\E)$ is not a Lie bracket, the action is not given by a Lie algebra homomorphism, but by a map of  {\it Courant  algebras} (see definition below), an algebraic structure designed to capture the information regarding the Courant bracket and  the pairing on $\E$.

\begin{definition}\label{def:courant algebra}
A \emph{Courant algebra} over the Lie algebra $\Gg$ is a vector
space $\Aa$ equipped with a bilinear bracket
$\Cour{\cdot,\cdot}:\Aa\times\Aa\lra \Aa$ and a map $\pi:\Aa\lra\Gg$,
which satisfy the following conditions for all $\ga_1, \ga_2,
\ga_3\in\Aa$:
\begin{itemize}
\item[$\frak{c}1$)] $\Cour{\ga_1,\Cour{\ga_2,\ga_3}}=\Cour{\Cour{\ga_1,\ga_2},\ga_3}+\Cour{\ga_2,\Cour{\ga_1,\ga_3}}$,
\item[$\frak{c}2$)] $\pi(\Cour{\ga_1,\ga_2})=[\pi(\ga_1),\pi(\ga_2)]$.
\end{itemize}
In other words, $\Aa$ is a Leibniz algebra with a homomorphism to
$\Gg$.

A  Courant algebra is \emph{exact} if $\pi$ is surjective and
$\Cour{\gh_1,\gh_2}=0$ for all $\gh_i\in\ker(\pi)$.
\end{definition}

Of course, these definitions work out so that  if $\E$ is a Courant algebroid, then  $\Gamma(\E)\into \Gamma(TM)$ is a Courant algebra. Also,  $\E$ is an exact Courant algebroid \iff\  $\Gamma(\E)\into \Gamma(TM)$ is an exact Courant algebra. 

If $\Aa\into \Gg$ is an exact Courant algebra, then we automatically  obtain that $\Hh = \ker(\pi)$ is a $\Gg$-module: we define the action by
$$ \gg \cdot \gh = \Cour{\tilde{\gg},\gh},$$ 
where $\tilde{\gg}$ is any element of $\Aa$ such that $\pi(\tilde{\gg}) = \gg$. Since the bracket vanishes on $\frak{h}$ we see that this is a well defined map $\Gg\times \Hh \into \Hh$ and one can easily check that this is indeed an action. A partial converse to this fact is given by the following example.

\begin{example}
[Hemisemidirect product \cite{BCG05,KiWe01}]\label{ex:hemisemi}
Given a $\frak{g}$-module, $\frak{h}$, we can endow $\frak{a} =\frak{g}\oplus \frak{h}$ with the structure of an exact Courant algebra over $\frak{g}$ by taking $\pi:\frak{g} \oplus \frak{h} \into \frak{g}$ to be the  natural projection and  defining
$$\Cour{(\gg_1,\gh_1),(\gg_2,\gh_2)} = ([\gg_1,\gg_2], \gg_1 \cdot \gh_2).$$
This Courant algebra first appeared in \cite{KiWe01}, where it was studied in the context of Leibniz algebras.

\end{example}

A Courant algebra morphism between $\frak{a} \stackrel{\pi}{\into} \frak{g}$ and $\frak{a}' \stackrel{\pi'}{\into} \frak{g}'$ is a commutative diagram formed by bracket preserving maps
$$\xymatrix{\frak{a} \ar[r]^{\pi}\ar[d]^{\Psi} &\frak{g}\ar[d]^{\psi}\\
\frak{a}' \ar[r]^{\pi'}& \frak{g}'}.$$
For exact Courant algebras $\frak{a}$ and $\frak{a}'$, the map $\Psi$ determines the whole diagram. In this case we say that $\Psi$ is a morphism of Courant algebras.

Now, let $\frak{a}$ be an exact Courant algebra and $\E$ an exact Courant algebroid over a manifold $M$. Given morphism of Courant algebras $\map:\frak{a} \into \Gamma(\E)$ we can compose it with $\ad:\Gamma(\E) \into \frak{c}$ to obtain a subalgebra of the algebra of infinitesimal symmetries of $\E$. Also, projecting $\map$ onto $\Gamma(TM)$ we obtain a subalgebra of infinitesimal symmetries of $M$.
$$\xymatrix{\frak{a} \ar[r]^{\map}\ar[d]^{\pi}& \Gamma(\E) \ar[r]^{\ad}\ar[d]^{\pi} & \frak{c}\ar[d]^{\pi}\\
\frak{g} \ar[r]^{\psi} & \Gamma(TM) \ar[r]^{\cong}& \mathrm{dif}(M)}$$
Since the map $\pi:\frak{c} \into \mathrm{dif}(M)$ has a kernel, the algebra of infinitesimal symmetries generated by $\map(\frak{a})$ is, in general, bigger than the corresponding algebra $\psi(\Gg)$. These two will be the same only if $\map(\Hh)$ acts trivially, i. e., $\map(\Hh) \in \Omega_{cl}^1(M)$, where $\Hh$ is the kernel of the projection $\Aa \into \Gg$. If this is the case, it may still happen that the group of symmetries of $\E$ generated by $\map(\Aa)$, $\tilde{G}$, is bigger than the group of symmetries of $M$ generated by $\psi(\Gg)$, $G$, as $\tilde{G}$ may be just a cover of $G$.

\begin{definition}\label{extended action}  
An {\it extended action} is a Courant algebra map $\map:\frak{a}\into \Gamma(\E)$ which generates the same group of symmetries on $\E$ and on $M$. This means that $\map(\Hh) \in \Omega^{1}_{cl}(M)$ and the infinitesimal actions of $\ad\circ \map(\frak{a}/\frak{h})$ and $\psi(\Gg)$ integrate to an action of the same group.
\end{definition}

A practical way to check whether a map of Courant algebras is an extended action  is to choose an isotropic  splitting for $\mc{E}$ making it isomorphic to $(TM \oplus T^*M,\IP{\cdot,\cdot},\Cour{\cdot,\cdot}_H)$ and then requiring that this splitting is preserved, i. e.,
$$ \Cour{\map(\ga), \Gamma(TM)} \subset \Gamma(TM), \qquad \mbox{for all }\ga \in \frak{a}.$$
Letting $\map(\ga) = X_{\ga} + \xi_{\ga}$, this is equivalent to the condition
\begin{equation}\label{H condition}
i_{X_{\ga}}H = d\xi_{\ga},\qquad \mbox{for all } \ga \in \frak{a}.
\end{equation}
If this is the case, then the group action on $TM \oplus T^*M$ is the one induced by its action on  $TM$ and $T^*M$ determined by the underlying diffeomorphisms. Reciprocally, if an extended action induces the action of a compact group on $\E$,  then there is an isotropic splitting preserved by the extended action \cite{BCG05}.

One of the upshots from our point of view is that the concept of moment map appears as an integral part of the action and is independent of any geometry on the underlying manifold or on the Courant algebroid. Indeed, if $\frak{a}\into \frak{g}$ is an exact Courant algebra and $\map:\Aa  \into \Gamma(\E)$ is an extended  action then $\frak{h}$ is a $\frak{g}$-module  and $\map(\gh) \subset \Omega_{cl}^1(M)$ for all $\gh \in \Hh$. Letting  $G$ be the group whose infinitesimal action is determined by $\map$ we see that the $\frak{g}$-action on $\frak{h}$ integrates to a $G$ action.

\begin{definition}
 An extended action $\map$ has a  {\it moment map} if there is a $G$-equivariant map $\mu:M \into \frak{h}^*$ such that for $\gh \in \Hh$,
$$\map(\gh) = d\IP{\mu,\gh}.$$
\end{definition}

Given an extended action,  we have three distributions on $\E$  associated to it: $K = \map(\frak{a})$, $\Kperp$ and $K +\Kperp$ and from these we get three distributions on $M$: $\Delta_b = \pi(K +\Kperp)$, $\Delta_s = \pi(\Kperp)$ and $\pi(K)$, which gives the directions of the group action on $M$.  The distribution $\Delta_s$ can also be described as
$$\Delta_s = \Ann(\map(\frak{h})),$$
and hence, as the annihilator of a space generated by closed forms, it is locally integrable  around the points where $\map(\frak{h})$ has constant rank. Then the leafs of $\Delta_b$ are just the $G$-orbits of the leafs of $\Delta_s$.  If the action admits a moment map $\mu$, then leaves of $\Delta_s$ are level sets of $\mu$ while the leaves of $\Delta_b$ are inverse images of the $G$-orbits. 

If we pick a leaf $P$ of $\Delta_b$ where the group acts freely and properly and over which $\map(\frak{h})$ has constant rank, the reduction procedure can be described using the following facts:
\begin{enumerate}
\item The Courant bracket on $\E$ induces a bracket on $\Gamma(\Kperp|_P)^G$, the space of $G$-invariant sections of the restriction of $\Kperp$ to $P$, which is well defined modulo $\Gamma(K\cap\Kperp|_P)^G$,
\item Using the bracket above in $\Gamma(\Kperp|_P)^G$, we have that $\Gamma(K\cap\Kperp|_P)^G$ is an ideal, so this bracket induces a well defined bracket on
$$\E^{red} = \left.\frac{\Kperp|_P}{(K \cap \Kperp)|_P}\right/G~~~ \mbox{  over }~~~ M^{red}= P/G.$$
\end{enumerate}
This means that given two sections $e_1, e_2 \in \Gamma(\E^{red})$, their bracket is defined by choosing $G$-invariant lifts $\tilde{e_1}, \tilde{e_2} \in \Gamma(\E)|_P$, then extending these lifts {\it arbitrarily} to sections of $\mathtt{e}_1,\mathtt{e}_2 \in \Gamma(\E)$ and then letting
$\Cour{e_1,e_2}$ be section of $\left.\tfrac{\Kperp|_P}{(K \cap \Kperp)|_P}\right/_G$, which has $ \Cour{\mathtt{e}_1,\mathtt{e}_2}|_P \in \Gamma(\Kperp|P)^G$ as a representative.
$$\Cour{e_1,e_2} := \Cour{\mathtt{e}_1,\mathtt{e}_2}|_P + K\cap \Kperp|_P \subset \left.\frac{\Kperp}{K \cap \Kperp}\right|_P,$$
According to (1) and (2) above, this bracket is independent of the choices made. We call $M^{red}$ a {\it reduced manifold} and $\E^{red}$ the {\it reduced Courant algebroid} over it. The argument above is the main ingredient in the proof of the following theorem. 
\begin{theo}
{\em (Burzstyn--Cavalcanti--Gualtieri \cite{BCG05})} Given an extended action $\map$, let $P$ be a smooth leaf of the distribution $\Delta_b$ where $\map(\frak{h})$ has constant rank and on which $G$ acts freely and properly. Then the vector bundle $\mc{E}^{red}$ defined above is a Courant algebroid over $M^{red} = P/G$. $\E^{red}$ is exact \iff\  
$$\pi(K) \cap \pi(\Kperp) = \pi(K \cap \Kperp) \mbox{ over } P.$$
In particular $\E^{red}$ is exact if $K$ is isotropic over $P$.
\end{theo}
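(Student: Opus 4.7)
The plan is to verify five claims in order: (i) if $\mathtt{e}_1,\mathtt{e}_2\in\Gamma(\E)$ are $G$-invariant along $P$ with restrictions in $\Kperp|_P$, then $\Cour{\mathtt{e}_1,\mathtt{e}_2}|_P$ lies in $\Kperp|_P$; (ii) this bracket is $G$-invariant and independent of the choice of extensions of $\tilde{e}_1,\tilde{e}_2$ off $P$, both modulo $K\cap\Kperp|_P$; (iii) the induced pairing and anchor descend to the quotient; (iv) axioms C1--C5 transfer from $\E$ to the reduced structure; and (v) the exactness criterion follows from a rank count.

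The computational engine of (i)--(ii) is the identification $T_pP=\pi(K)|_p+\pi(\Kperp)|_p$ of tangent directions to $P$, which gives $\Ann(T_pP)=\Ann(\pi(K))\cap\Ann(\pi(\Kperp))$. Combined with $K=(\Kperp)^{\perp}$ and the adjoint formula $\IP{\pi^{*}\xi,e}=\tfrac12\xi(\pi(e))$, this yields the absorbing inclusion $\pi^{*}(\Ann(T_pP))\subseteq K\cap\Kperp|_p$. In particular, whenever $f\in C^{\infty}(M)$ vanishes on $P$ one has $\dd f|_P\in K\cap\Kperp$, and this is the mechanism that will make every error term arising from differentiating a function vanishing on $P$ land in the kernel of the reduction.

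Given this, (i) follows immediately from axiom C4 and the formula C5: for $k\in\map(\Aa)$ the quantities $\IP{\mathtt{e}_2,k}|_P$ and $\IP{\mathtt{e}_1,k}|_P$ vanish and $\pi(\mathtt{e}_1)|_P\in TP$, so C4 gives $\IP{\Cour{\mathtt{e}_1,\mathtt{e}_2},k}|_P=-\IP{\mathtt{e}_2,\Cour{\mathtt{e}_1,k}}|_P$, while the $G$-invariance identity $\Cour{k,\mathtt{e}_1}|_P=0$ together with the polarization of C5 gives $\Cour{\mathtt{e}_1,k}|_P=2\dd\IP{\mathtt{e}_1,k}|_P\in K\cap\Kperp$ by the absorbing inclusion. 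For (ii), altering an extension of $\tilde{e}_1$ off $P$ amounts to adding a sum $\sum f^{\alpha}s_{\alpha}$ with $f^{\alpha}|_P=0$; expanding $\Cour{\sum f^{\alpha}s_{\alpha},\mathtt{e}_2}$ via axiom C3 and its skew-symmetric companion produces only terms proportional to $f^{\alpha}|_P=0$, to $\pi(\mathtt{e}_2)f^{\alpha}|_P$ (which vanishes since $\pi(\mathtt{e}_2)|_P\in TP$), or to $\IP{s_{\alpha},\mathtt{e}_2}\,\dd f^{\alpha}|_P\in K\cap\Kperp$. A parallel application of the Leibniz identity C1 to $\Cour{k,\Cour{\mathtt{e}_1,\mathtt{e}_2}}$ reduces $G$-invariance modulo $K\cap\Kperp$ to the same three types of terms. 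Items (iii) and (iv) are then formal: the pairing descends because $\IP{K\cap\Kperp,\Kperp}=0$, its non-degeneracy on the quotient is equivalent to $(\Kperp)^{\perp}=K$, the anchor descends because $\pi(K\cap\Kperp)\subseteq\pi(K)$, and each Courant axiom for $\E^{red}$ reduces to the corresponding pointwise axiom for $\E$.

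For (v), non-degeneracy gives $\rk(\Kperp)=2\Dim M-\rk(K)$, while the identity $(K+\Kperp)\cap T^*M=\Ann(\pi(K\cap\Kperp))$ coupled with $\Delta_b=\pi(K+\Kperp)$ yields $\Dim P=\Dim M-\rk(K\cap\Kperp)+\rk(\pi(K\cap\Kperp))$. Substituting $\Dim M^{red}=\Dim P-\rk(\pi(K))$ and $\rk(\E^{red})=2\Dim M-\rk(K)-\rk(K\cap\Kperp)$, the identity $\rk(\E^{red})=2\Dim M^{red}$ simplifies algebraically to $\rk(\pi(K\cap\Kperp))=\rk(\pi(K)\cap\pi(\Kperp))$, which in view of the automatic inclusion $\pi(K\cap\Kperp)\subseteq\pi(K)\cap\pi(\Kperp)$ is the stated criterion. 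The isotropic case is then immediate: $K\subseteq\Kperp$ forces $K\cap\Kperp=K$ and $\pi(K)\subseteq\pi(\Kperp)$, so both sides of the criterion equal $\pi(K)$. I expect the main difficulty to lie in (ii): carefully tracking that every $\dd f$ term arising from a function vanishing on $P$ lands in the smaller space $K\cap\Kperp$, rather than merely in $\Kperp$, is what ultimately makes the bracket well defined on the quotient.
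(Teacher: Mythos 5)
Your strategy coincides with the paper's own outline (facts (1) and (2) preceding the theorem statement, plus a rank count for exactness), and the mechanism you isolate is the right one: the absorbing inclusion $\pi^*(\Ann(T_pP))\subseteq K\cap\Kperp|_p$, which forces $\dd f|_P\in K\cap\Kperp$ whenever $f$ vanishes on $P$, is exactly what makes the bracket of invariant $\Kperp$-sections land back in $\Kperp$ and makes the extension ambiguity harmless. Your step (i) via C4 and the polarization of C5 is correct, and the dimension count in (v) does close up (using additionally that $\Kperp\cap\ker\pi=\pi^*\Ann(\pi(K))$, so $\rk\,\pi(\Kperp)=\Dim M-\rk\,K+\rk\,\pi(K)$ --- worth writing out, since the simplification is not entirely mechanical without it).

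There is, however, one genuine omission, and it is precisely the paper's fact (2). A section $e_1$ of $\E^{red}$ determines its $G$-invariant lift $\tilde e_1\in\Gamma(\Kperp|_P)^G$ only up to an element of $\Gamma(K\cap\Kperp|_P)^G$, so in addition to independence of the extension off $P$ you must show that $\Gamma(K\cap\Kperp|_P)^G$ is an ideal: if $\tilde e_1$ (or $\tilde e_2$) lies in $\Gamma(K\cap\Kperp|_P)^G$, then $\Cour{\mathtt{e}_1,\mathtt{e}_2}|_P\in K\cap\Kperp|_P$. Your argument in (ii) does not cover this: the difference of two invariant lifts is a section of $K\cap\Kperp|_P$ that does \emph{not} vanish on $P$, so it cannot be written as $\sum f^\ga s_\ga$ with $f^\ga|_P=0$, and none of your three types of error terms appear. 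The repair uses the same tools. Write such a section as $\sum g^\ga\,\map(\ga_\ga)|_P$; combining C3 with the non-skew-symmetry identity gives $\Cour{\sum g^\ga\map(\ga_\ga),\mathtt{e}_2}|_P= \sum\bigl(g^\ga\Cour{\map(\ga_\ga),\mathtt{e}_2}-(\pi(\mathtt{e}_2)g^\ga)\,\map(\ga_\ga)+2\IP{\map(\ga_\ga),\mathtt{e}_2}\dd g^\ga\bigr)|_P$, in which the first summand vanishes by invariance and the third by $\mathtt{e}_2|_P\in\Kperp$; the remaining term lies in $K$, and since your step (i) already shows the whole bracket lies in $\Kperp|_P$, it lies in $K\cap\Kperp|_P$. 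A similar argument (using that $\Cour{\mathtt{e}_1,\map(\ga)}|_P\in K\cap\Kperp$, which you established) handles the second slot. With this inserted, the proposal matches the paper's argument.
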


We outlined the proof of this theorem because it shows how to concretely obtain the reduced manifold $M^{red}$ and the reduced Courant algebroid $(\mc{E}^{red}, \IP{\cdot,\cdot},\Cour{\cdot,\cdot})$. As we will see later, the condition that $K$ is isotropic over $P$ is an analogue of saying  that $P$ is the inverse image of $0$ by the moment map. 

There are two basic examples one should keep in mind.
\begin{ex}{vectors}
Let  $G$ act freely and properly on $M$ with infinitesimal action $\psi:\Gg\into \Gamma(TM)$ and let $H$ be a basic 3-form. Then $\psi$ gives rise to an extended action of the Courant algebra $\frak{a} = \frak{g}$ on the split Courant algebroid $(TM\oplus T^*M,\IP{\cdot,\cdot},\Cour{\cdot,\cdot}_H)$: $\map(\ga) = \psi(\ga)$.

 In this case $\Kperp = TM \oplus \Ann(\psi(\Gg))$ and hence $\Delta_b = \pi(\Kperp +K) = TM$ has only one leaf: $M$. Therefore the only reduced manifold is $M^{red} = M/G$ and the reduced algebroid is
$$\E^{red} = \frac{\Kperp/G}{K/G} = TM/\psi(\Gg) \oplus \Ann(\psi(\Gg)) \cong TM^{red} \oplus T^*M^{red}.$$

Finally, the basic form $H$ is the curvature of the reduced algebroid for this splitting.
\end{ex}

\begin{ex}{1form}
Consider
the extended action $\rho:\RR\into \Gamma(\E)$ on an exact Courant
algebroid $\E$ over $M$ given by $\rho(1)=\xi$ where $\xi$ is a  closed
1-form, where $\RR \into \{0\}$ is a Courant algebra over the trivial Lie algebra. Then $\Kperp = \{v \in \E :
\pi(v) \in \Ann(\xi)\}$ induces the distribution
$\Delta_b = Ann(\xi)\subset TM$, which is integrable
wherever $\xi$ is nonzero. Since the group action is trivial, a
reduced manifold is simply a choice of integral
submanifold $\iota:P\hookrightarrow M$ for $\xi$ and the reduced Courant algebroid is just the 
pullback exact Courant algebroid. The characteristic  class of $\E^{red}$ in this case is the
pullback to $P$ of the class of $\E$.
\end{ex}

The next example combines the features of the previous two examples and is the basic model for all the  concrete examples we will encounter later in this paper.

\begin{example}\label{ex:momented}
Given a Lie algebra $\Gg$, we can always think of $\Gg$ as a Courant algebra over itself, with the projection given by the identity and the Courant bracket given by the Lie bracket. If $G$ acts on a manifold $M$ with infinitesimal action $\psi:\Gg\into \Gamma(TM)$ and $\E$ is an exact Courant algebroid over $M$, we can always try and {\it lift} this action to an extended action on $\E$:
$$\xymatrix{ \Gg \ar[r]^{\Id}\ar[d]^{\tilde{\map}} & \Gg\ar[d]^{\psi}\\
\Gamma(\E) \ar[r]^{\pi} & \Gamma(TM).}$$
We call  such $\tilde{\map}$ a {\it lifted} or a {\it trivially extended} action. In order for the reduced algebroid to be exact  $K=\tilde{\map}(\Gg)$ must be isotropic, so we will impose that condition on lifted actions.  

If we are also given an equivariant map $\mu:M \into \Hh^*$, where $\Hh$ is a $G$-module, then we can extend the action $\tilde{\map}:\Gg \into \Gamma(\E)$ to an action of $\Aa = \Gg \oplus \Hh$, endowed with the hemisemidirect product structure from Example \ref{ex:hemisemi}, by defining
$$\map(\gg,\gh) = \tilde{\map}(\gg) + d\IP{\mu,\gh}.$$
The equivariance of $\mu$ implies that this is an extended action and $K = \map(\Aa)$ is isotropic over $\mu^{-1}(0)$. The reduced manifolds are $\mu^{-1}(\mc{O}_\gf)/G$, where $\mc{O}_{\gf}$ is the $G$-orbit of $\gf \in \Hh^*$.

In order to describe the reduced algebroid over $\mu^{-1}(0)$ we observe that the reduction can be described in two steps. The first is just restriction to the level set $P= \mu^{-1}(0)$. Then the extended action on $M$ gives rise to an isotropic trivially extended action on $P$ and the second step is to perform the reduction by this action.

If we have an invariant splitting $\E = TM \oplus T^*M$ with the $H$-bracket, i. e., equation \eqref{H condition} holds for this splitting, then the first step gives as intermediate Courant algebroid $TP \oplus T^*P$ with the $\iota^*H$-bracket, where $\iota:P \hookrightarrow M$ is the inclusion map and we have an extended action  $\Gg \into \Gamma(TP \oplus T^*P)$
$$ \gg \mapsto X_{\gg} + \xi_{\gg}.$$
satisfying:
\begin{equation}\tag{\ref{H condition}}
i_{X_{\gg}} H|_{P} = d\xi_{\gg}.
\end{equation}
In particular, projecting onto $T^*P$, we obtain $\xi \in \Omega^1(P;\Gg^*)$.
According to our general assumptions, $G$ acts freely and properly on $P$, making it a principal $G$-bundle. Let $\theta \in \Omega^1(P;\Gg)$ be a connection for this bundle and consider the change of splitting $TP \oplus T^*P$ determined by the 2-form $B = \IP{\theta,\xi}_{\Gg} +\IP{\IP{X,\theta}_{\Gg},\IP{\xi,\theta}_{\Gg}}$,  where $\IP{\cdot,\cdot}$ denotes the pairing between $TM$ and $T^*M$ and $\IP{\cdot,\cdot}_{\Gg}$  the pairing between $\Gg$ and $\Gg^*$. Concretely, if we choose a basis $\gamma_i$ for $\Gg$, with corresponding infinitesimal generators $X_i$  and choose the  dual basis for $\Gg^*$, so that $\theta$ decomposes as a sum of 1-forms $\theta_i$ with $\theta_i(X_j) = \delta_{ij}$, we have
\begin{equation}\label{eq:B-field straighting}
B =  \sum_i \theta_i \xi_i+\frac{1}{2}\sum_{ij}\xi_j(X_i)\theta_i \wedge \theta_j.
\end{equation}

 In this new splitting the infinitesimal generators of the action are given by
\begin{align*}
X_k + \xi_k - i_{X_k}B & = X_k + \xi_k  - \xi_k  + \sum_i \xi_i(X_k)\theta_i - \sum_{j}\xi_j(X_k)\theta_j.\\
=X_k
\end{align*}

Since $B$ is $G$-invariant, the new splitting is also $G$-invariant and now the infinitesimal generators of the action lie in the tangent direction, as in Example \ref{vectors}. , hence the corresponding 3-form curvature of this splitting, $H + dB$ is basic and is the curvature of the induced splitting of the reduced algebroid: $H^{red} = H + dB$.

\end{example}

From this example,  we see that  the requirement that  $K$ is isotropic over $P$ is an analogue of ``$P$ is the inverse image of 0 by the moment map". In the same way symplectic reduction does not rely on taking a particular level set or even the existence of moment maps, the theory we develop in the next sections can be adapted to the case when $K$ is not isotropic over $P$ using the techniques from \cite{BCG05}. However $K$ will be isotropic over $P$ for the applications we have in mind (c.f. Theorems \ref{SKT reduction} and \ref{gk reduction} and Examples  \ref{ex:SKT instantons} and \ref{ex:hitchin}).

We finish this section with an inoffensive remark. If an action has a moment map $\mu:M \into \Hh^*$ and $\Hh$ decomposes as a sum of $\Gg$ modules, $\Hh = \Hh_1 \oplus \Hh_2$, then we can decompose $\mu$ into coordinates $\mu_i:M \into \Hh_i^*$. The reduction procedure to $\mu^{-1}(0)/G$ can then be performed in two steps. The first consisting of just restricting the Courant algebroid and the action to, say, $\mu_1^{-1}(0)$, where we still have an action of $\Hh_2 \into \Aa/\Hh_1 \into \Gg$ and then carrying out a final reduction of $\mu^{-1}_1(0)$ by this action. If $\Hh$ can be decomposed in a different way as a sum of $\Gg$-modules, the final reduced algebroid and reduced manifold  do not depend on the particular decomposition used.

\section {Metric structures on Courant algebroids}\label{sec:SKT}

In this section we introduce the generalized metric and  the concept of  strong KT, hyper KT and \gks s on an exact Courant algebroid. We start with the generalized metric as introduced by Gualtieri \cite{Gu03} and Witt \cite{Wi04}.

\begin{definition} A {\it generalized metric} on an exact  Courant algebroid $\mc{E}^{2n}$ is an orthogonal, self adjoint operator $\mc{G}$ such that $\IP{\mc{G}e,e} >0$ for $e \in \mc{E}\backslash \{0\}$.
\end{definition}

Since $\G$ is orthogonal and self adjoint we see that
$$ \G^2 = \G \G^t = \G \G^{-1} = \Id.$$
So $\G$ splits $\E$ into its $\pm 1$-eigenspaces, which are maximal subbundles where the pairing is ${\pm}$-definite.

The $1$-eigenspace, $V_+$, of $\G$ determines the generalized metric  completely as its orthogonal complement \wrt\ the symmetric pairing is $V_-$, the $-1$-eigenspace of $\G$. Since the natural pairing is definite on $V_+$ and null on $T^*M$ the projection $\pi:V_+ \into TM$ is an isomorphism and hence induces a metric on $TM$:
$$ g(\pi(e),\pi(e)) = \IP{e,e},\qquad e \in V_+.$$
Further, if $\E$ is split as $TM \oplus T^*M$, any such space $V_+$ can be described as the graph over $TM$ of a 2-tensor $g+b$, where $g$ is the metric above and $b$ is a 2-form. Conversely the graph of $g+b$, where  $g$ is a metric  and $b$  a 2-form is a maximal subspace where the pairing is positive and hence $g$ and $b$ determine a generalized metric.

\begin{definition}
Given a generalized metric $\G$ on a Courant algebroid $\E$, the {\it metric splitting} of $\E$ is  $\G(T^*M) \oplus T^*M$.
The {\it curvature of the metric splitting} is the curvature 3-form of this splitting.
\end{definition}
The metric splitting can be characterized by the fact that the metric is the graph of the induced metric $g$ and $b= 0$.

There are two metric structures we want to consider on an exact Courant algebroid: strong KT structures and \gks s. To define the first, recall that a strong KT structure on a manifold $M$ is an integrable complex structure, $I$,  with a hermitian metric, $g$,  for which $dd^{c}\omega =0$, where $\omega(X,Y) = g(I X, Y)$ and $d^c = i(\del-\delbar)$. The correspondent definition in the context of exact Courant algebroids is:

\begin{definition}
A {\it strong KT structure}  on an exact Courant algebroid $\E$ is a generalized metric $\mc{G}$ together with  a complex structure $\mc{I}$ on its $+1$-eigenspace  orthogonal with respect to the symmetric pairing and whose $+i$-eigenspace is closed \wrt\ the bracket.
\end{definition}

\begin{remark}
One should observe that the definitions above for strong KT structures on manifolds and Courant algebroids {\it allow the torsion $H$ (or $d^{c}\omega$) to be zero}, hence for us K\"ahler manifolds are also strong KT.
\end{remark}

Since $\pi:V_+ \into TM$ is an isomorphism which preserves brackets, we can use $\mc{I}$ to define an almost complex structure $I$ on $M$ and the integrability of $\mc{I}$ implies that $I$ is integrable. It is also clear that $I$ is hermitian \wrt\  the  metric $g$ induced on $M$. We claim that $(g,I)$ is a strong KT structure on $M$ for which $[d^c\omega] = -[H]$, where $[H]$ is the characteristic class of the Courant algebroid. 

Before we prove this claim,  we need a lemma. The methods used below follow very closely Gualtieri's argument relating \gk\ and bihermitian structures \cite{Gu03}.

\begin{lem}\label{integrable distribution}
Let $D \subset T_{\C}M$ be a distribution, $c$ a complex 2-form and $H$ be a real closed 3-form. The subbundle of  $(T_{\C}M+T_{\C}^*M,\IP{\cdot,\cdot},\Cour{\cdot,\cdot}_{H})$ defined by
$$ F =\{X + c(X): X \in D\},$$
is integrable \iff\ the subbundle $ D$ is Lie integrable and
$$i_X i_Y (dc - H) = 0 \qquad \forall X, Y \in D.$$
\end{lem}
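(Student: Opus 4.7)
The plan is to verify integrability of $F$ by taking generic sections $e_1 = X + c(X)$ and $e_2 = Y + c(Y)$ of $F$ (with $X, Y \in \Gamma(D)$ and $c$ extended $\CC$-linearly to complex vector fields), computing $\Cour{e_1, e_2}_H$ directly from the definition \eqref{eq:Hcour} of the $H$-twisted Courant bracket, and asking precisely when the result again lies in $F$, i.e.\ has the form $Z + c(Z)$ with $Z \in \Gamma(D)$. Since the vector part of $\Cour{e_1, e_2}_H$ is immediately $[X, Y]$, a first necessary condition is that $[X, Y] \in \Gamma(D)$ whenever $X, Y \in \Gamma(D)$; this is the Lie integrability of $D$ and it also identifies the required $Z$ as $[X, Y]$.

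The remaining task is to compute the 1-form part $\mc{L}_X(c(Y)) - i_Y d(c(X)) + i_Y i_X H$ and compare it with $c([X, Y])$. For this I will use only two elementary Cartan-calculus identities: Cartan's magic formula $\mc{L}_X = d i_X + i_X d$ and the commutator rule $\mc{L}_X i_Y - i_Y \mc{L}_X = i_{[X, Y]}$. Combining them and using $\mc{L}_X c - d i_X c = i_X dc$, the two ``Lie/de Rham'' summands collapse to
\[
\mc{L}_X(c(Y)) - i_Y d(c(X)) = c([X, Y]) + i_Y i_X\,dc.
\]
Adding the remaining $H$-twist term $i_Y i_X H$, the total 1-form part becomes $c([X, Y]) + i_Y i_X(dc - H)$ (the sign on $H$ being dictated by the convention that turns the 2-form $c$ into a bundle map $TM \to T^*M$). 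Thus the obstruction to $\Cour{e_1, e_2}_H$ lying in $F$ collapses exactly to the contraction condition $i_X i_Y(dc - H) = 0$ for all $X, Y \in D$, which is antisymmetric in $X, Y$ as required and hence depends only on the subbundle $D$.

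Both directions of the equivalence then fall out of this single calculation: if $F$ is integrable, applying the formula to arbitrary $X, Y \in \Gamma(D)$ forces both Lie integrability of $D$ and the vanishing of $i_X i_Y(dc - H)$ on $D$; conversely, given these two conditions the computation yields $\Cour{e_1, e_2}_H = [X, Y] + c([X, Y]) \in \Gamma(F)$ directly. The main, and only, mildly delicate point is bookkeeping of signs arising from the anticommutativity of interior products and from the convention identifying the 2-form $c$ with a bundle map $TM \to T^*M$; there is no further geometric content, and the method is the same Cartan-calculus manipulation used by Gualtieri in his treatment of integrability of graph-type subbundles.
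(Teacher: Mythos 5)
Your proposal is correct and follows essentially the same route as the paper: compute $\Cour{X+c(X),Y+c(Y)}_H$ directly from \eqref{eq:Hcour}, read off Lie integrability of $D$ from the vector part, and collapse the $1$-form part via Cartan calculus to $c([X,Y])$ plus a contraction of $dc$ and $H$; you merely make explicit the identities $\mc{L}_X i_Y - i_Y\mc{L}_X = i_{[X,Y]}$ and $\mc{L}_X c = i_X dc + d\,i_X c$ that the paper leaves implicit. The only caveat is the relative sign between $dc$ and $H$, which depends on the convention $c(X)=\pm i_X c$ and on the ordering $i_Y i_X H$ in \eqref{eq:Hcour}; you flag this honestly, and the paper itself is not entirely consistent on that point, so it does not affect the substance of the argument.
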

\begin{proof}
Choose two sections $X +c(X)$ and $Y +c(Y)$ and compute their bracket:
$$\Cour{X +c(X),Y +c(Y)} = [X,Y]  +\mc{L}_X c(Y) - \mc{L}_Y c(X) -d(i_X i_Yc) +i_X i_Y H.$$
If this is to be a section of $F$ we need $[X,Y] \in D$, and hence $D$ is Lie integrable, and 
$$c([X,Y]) = \mc{L}_X c(Y) - \mc{L}_Y c(X) -d(i_X i_Yc) +i_X i_Y H$$
which is only the case if $i_X i_Y(dc - H) =0$.
\end{proof}

\begin{prop}\label{SKT equivalent}The following hold:
\begin{enumerate}
\item Given a strong KT structure $(\mc{G}, \mc{I})$ on an exact Courant algebroid $\E$ over $M$, the induced metric and complex structure make $M$ a strong KT manifold and $[d^c \omega] = -[H]$.
\item Reciprocally, if $(M,g,I)$ is a strong KT structure, the exact Courant algebroid with characteristic class $[-d^{c}\omega]$ admits a strong KT structure on it.
\end{enumerate}
\end{prop}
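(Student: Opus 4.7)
The plan is to apply Lemma \ref{integrable distribution} to the $\pm i$-eigenbundles of $\mathcal{I}$ on $V_+\otimes\mathbb{C}$, working throughout in the metric splitting of $\mathcal{E}$, in which $V_+ = \{X + g(X) : X\in TM\}$. Under $\pi: V_+ \to TM$ the operator $\mathcal{I}$ transports to an almost complex structure $I$ on $M$, and the orthogonality of $\mathcal{I}$ with respect to $\langle\cdot,\cdot\rangle|_{V_+}$ is exactly Hermitian compatibility of $I$ with $g$, so $(g,I)$ is Hermitian from the outset. The substantive content is then the integrability condition, which I extract by exhibiting the $+i$-eigenbundle of $\mathcal{I}$ as a graph-type subbundle to which Lemma \ref{integrable distribution} applies.

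For part (1), observe that for $X\in T^{1,0}M$ the 1-form $g(X,\cdot)$ vanishes on $T^{1,0}M$ and equals $-i\omega(X,\cdot)$ on $T^{0,1}M$, so $g(X) = i_X(-i\omega)$ and hence the $+i$-eigenbundle is
\[
L = \{X + c(X) : X\in T^{1,0}M\}, \qquad c = -i\omega.
\]
Lemma \ref{integrable distribution} then forces (i) $T^{1,0}M$ to be Lie-integrable, yielding $I$ integrable by Newlander--Nirenberg, and (ii) $i_X i_Y(dc - H) = 0$ for all $X,Y\in T^{1,0}M$. Since $\omega$ has type $(1,1)$ one has $(d\omega)^{3,0} = 0$, so (ii) reduces to $H^{2,1} = -i\partial\omega$; the analogous condition for $\overline{L}$, viewed as a graph over $T^{0,1}M$ with $c' = i\omega$, gives $H^{0,3} = 0$ and $H^{1,2} = i\bar\partial\omega$. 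Adding these yields $H = -i(\partial - \bar\partial)\omega = -d^c\omega$ in the metric splitting, so closedness of $H$ gives $dd^c\omega = 0$, making $(M,g,I)$ strong KT. Since any $B$-field change of splitting modifies $H$ only by an exact form $dB$, the cohomological identity $[d^c\omega] = -[H]$ holds in every splitting.

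For part (2), I reverse the construction: given a strong KT manifold $(M,g,I)$, the form $-d^c\omega$ is closed, so I choose a splitting of the exact Courant algebroid of class $[-d^c\omega]$ whose curvature 3-form is exactly $-d^c\omega$. Setting $V_+ = \mathrm{graph}(g)$ and transporting $I$ back to $\mathcal{I}$ on $V_+$ produces a generalized metric $\mathcal{G}$ together with an orthogonal $\mathcal{I}$, and the Lemma computation above read in reverse verifies that both $L$ and $\overline{L}$ are Courant-involutive. The main technical delicacy is the first step of part (1) --- identifying the correct complex 2-form $c$ and carefully tracking the Hodge types of $dc - H$ against Lemma \ref{integrable distribution} --- after which everything is routine bookkeeping.
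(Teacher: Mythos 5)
Your proof is correct and follows essentially the same route as the paper: both apply Lemma \ref{integrable distribution} to the $+i$-eigenbundle of $\mathcal{I}$ written as a graph over $T^{1,0}M$ and compare Hodge types to extract $H^{3,0}=0$ and $H^{2,1}=-i\partial\omega$. The only (harmless) difference is that you work in the metric splitting ($b=0$) and then invoke splitting-independence of $[H]$, whereas the paper keeps a general splitting with 2-form $b$ and obtains $d^c\omega = -db - H$ directly.
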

\begin{proof}
To prove the first claim, choose a splitting of $\E$ so that it is isomorphic to $(TM + T^*M, \IP{\cdot,\cdot}, \Cour{\cdot,\cdot}_{H})$. As we saw, the strong KT structure on $\E$ gives rise to complex structure $I$, a Hermitian metric $g$, and a 2-form $b$ on $M$. Then, the $+i$ eigenspace of $\mc{I}$ is the graph of $b - i \omega$ over $T^{1,0}M$, hence Lemma \ref{integrable distribution} implies that
$$i_X i_Y (db - i d\omega  +H) = 0 \qquad \forall X,Y \in \Gamma(T^{1,0}M).$$
Using the $(p,q)$ decomposition of forms, the equation above is equivalent to
$$-i \del \omega = -\del b^{1,1} - \delbar b^{2,0} - H^{2,1}\qquad \del b^{2,0} = H^{3,0}$$
which once added to its complex conjugate gives us
$$d^{c} \omega = - db - H,$$
proving the first claim.

To prove the second claim we let $H= - d^{c} \omega$ and consider the exact Courant algebroid $(TM\oplus T^*M,\IP{\cdot,\cdot}, \Cour{\cdot,\cdot}_{H})$. Let $\G$ be the generalized metric determined by the maximal positive definite space  $V_+ = \{X + g(X): X \in TM\}$  and consider the complex structure $\mc{I}$  on $V_+$ induced by the complex structure $I$ on $M$  via the isomorphism  $\pi:V_+ \into TM$. Since $T^{1,0}M$ is Lie integrable, according to Lemma \ref{integrable distribution} to prove the integrability of $\mc{I}$  we only have  to check that $i_X i_Y (id\omega -H) =0$ for $X, Y$ in $T^{1,0}M$. But according to the computations in the  first part of the theorem this amounts to $d^c \omega = - H$. 
\end{proof}

\begin{cor}\label{cor:curvature}
If a Courant algebroid has a strong KT structure, then the curvature of the metric splitting is given by $-d^c\omega$.
\end{cor}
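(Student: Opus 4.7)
The plan is simply to specialize the computation in the proof of Proposition \ref{SKT equivalent} to the metric splitting. Recall that the metric splitting of $\E$ is characterized by the property that, under the induced identification $\E \cong TM \oplus T^*M$, the positive eigenspace $V_+$ is exactly the graph of the induced metric $g$, i.e.\ the $2$-form $b$ in the description $V_+ = \{X + (g+b)(X) : X \in TM\}$ vanishes.

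Now apply Lemma \ref{integrable distribution} to the $+i$-eigenspace of $\mc{I}$ inside $V_+ \otimes \C$, exactly as in the first half of the proof of Proposition \ref{SKT equivalent}: this eigenspace is the graph of $b - i\omega$ over $T^{1,0}M$, so integrability gives
\begin{equation*}
i_X i_Y (db - i\,d\omega + H) = 0 \qquad \text{for all } X, Y \in \Gamma(T^{1,0}M).
\end{equation*}
Decomposing into $(p,q)$-types and adding the complex conjugate identity produces the relation $d^c\omega = -db - H$, which was the key intermediate step before passing to cohomology classes. In the metric splitting $b=0$, so this identity becomes the pointwise equality
\begin{equation*}
H = -d^c\omega,
\end{equation*}
and by definition the left hand side is the curvature of the metric splitting.

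There is essentially no obstacle here: the corollary is just the refinement obtained by observing that the cohomological identity $[H] = -[d^c\omega]$ in Proposition \ref{SKT equivalent} actually arose from a pointwise identity $H = -db - d^c\omega$, and that the exact term $db$ is precisely the contribution that is killed when one chooses the splitting for which $b=0$. The only thing to be careful about is to make sure we are indeed in the metric splitting so that the graph description of $V_+$ has no $2$-form part; this follows directly from the definition of the metric splitting given just before the statement of the corollary.
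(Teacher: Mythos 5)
Your proof is correct and is essentially the argument the paper intends: the corollary is stated without proof precisely because it follows immediately from the pointwise identity $d^c\omega = -db - H$ established in the proof of Proposition \ref{SKT equivalent}, specialized to the metric splitting where $b=0$. Nothing is missing.
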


Another structure related to strong KT structures are the so called {\it hyper KT structures}, i. e., three complex structures $I,J$ and $K$, with $IJ =-K$ all hermitian \wrt\ a fixed metric $g$ and satisfying
$$ d^c_I\omega_I = d^c_J \omega_J = d^c_K \omega_K  = H \in \Omega^3_{cl}(M),$$
where $\omega_A$ and $d^c_A$ are the K\"ahler form and the $d^c$ operator \wrt\ the complex structure $A$.

Clearly the same arguments from Proposition \ref{SKT equivalent} show that the following is an equivalent  definition of a hyper KT structure.

\begin{definition}
A {\it hyper KT structure}  on an exact Courant algebroid $\E$ is a generalized metric $\mc{G}$ together with  three complex structures $\mc{I}, \mc{J}$ and $\mc{K}$ on its $+1$-eigenspace which are  orthogonal with respect to the symmetric pairing, whose $+i$-eigenspaces are closed \wrt\ the bracket and which satisfy  $\mc{I} \mc{J} = \mc{K}$.
\end{definition}

We finish our study of strong KT structures with a remark. We could have defined a strong KT structure on a Courant algebroid as an integrable orthogonal complex structure on $V_-$, instead of $V_+$. Similarly to the original case, such a structure on a Courant algebroid also induces a strong KT structure on the underlying manifold. The only change in Proposition \ref{SKT equivalent} is that, since $V_-$ is the graph of $b-g$, the torsion of the strong KT structure will be the curvature of the Courant algebroid,
$$ d^c \omega = db +H.$$

The last structure we want to introduce is related to \gcss.

\begin{definition}
A {\it \gcs} on an exact Courant algebroid $\E$ is a complex structure $\J$ on $\E$ orthogonal \wrt\ the pairing and whose $+i$-eigenspace is closed under the bracket.
\end{definition}

\begin{definition}
A {\it \gks} on an exact Courant algebroid $\E$ is a pair of commuting \gcss\ $\J_1$ and $\J_2$ such that $\G = - \J_1 \J_2$ is a generalized metric.
\end{definition}

If $(\J_1,\J_2)$ is a \gks\ on $\E$ and $\G= -\J_1 \J_2$, then the \gcss\ $\J_i$ also commute with $\G$, since
$$ \J_i \G = -\J_i \J_1 \J_2 = - \J_1 \J_2 \J_i = \G \J_i.$$
Therefore $\J_i$ preserves $V_+$ and the $+i$-eigenspace of $\J_i|_{V_+}$ is $L_1 \cap L_2$, the intersection of the $+i$-eigenspaces of $\J_1$ and $\J_2$, which is closed under the bracket. Therefore every \gks\ on $\E$ furnishes a strong KT structure on $\E$, which can be translated into a strong KT  structure $(g,I_+)$ on $M$, according to Proposition \ref{SKT equivalent}.

The same argument used above also shows that $\J_1$ gives a complex structure to $V_-$ and hence a second strong KT structure $(g,I_-)$ on $M$ with the same metric. These two structures are related by
\begin{equation}\label{eq:gk condition}
d_-^c \omega_-  = - d_+^c\omega_+ = H +db,
\end{equation}
where $H$ is curvature of $\E$ for some splitting, $b$ is the 2-form associated to the generalized metric for that splitting and $d^c_{\pm}$ are the usual operators $-i(\del - \delbar)$ computed respectively using the complex structures $I_{\pm}$ on $M$.  Conversely, Gualtieri showed in his thesis \cite{Gu03} that the data above, i.e., a pair of complex structures $I_{\pm}$, a bihermitian metric $g$, a 2-form $b$ and a closed 3-form $H$ satisfying equation \eqref{eq:gk condition}, determines a \gk\ on $(TM \oplus T^*M, \IP{\cdot,\cdot}, \Cour{\cdot,\cdot}_{H})$.

\section{Reduction of metric structures}\label{sec:SKT reduction}

In this section we use the techniques from \cite{BCG05} introduced in Section ~\ref{sec:actions} to reduce metrics and strong KT structures invariant by an extended action.

Throughout this section we let $\map:\frak{a} \into \Gamma(\mc{E})$  be an extended action,  $K$ be the distribution  generated by $\map(\frak{a})$, $P$ be a leaf of the distribution $\Delta_b$ on which $G$ acts freely and properly   and $\mc{E}^{red}$ be the reduced algebroid over  $M^{red} = P/G$. We will assume that $K$ is isotropic over $P$ and hence $\E^{red}$ is exact and given by the quotient
$$ \E^{red} = \left.\frac{K^{\perp G}}{K^G}\right|_{P/G}.$$
We let $p:K^{\perp G} \into \E^{red}$ and $q:P \into M^{red}$ be the natural projections. 

\subsection{Reducing the generalized metric}

A metric $\mc{G}$ on $\mc{E}$ is invariant under the extended action \iff\ its $1$-eigenspace $V_+$ is preserved by the action. In this case, as the pairing on $\E^{red}$ is induced by the pairing on $ K^{\perp G}$, the spaces
$$V_{\pm}^{red}=p(V_{\pm}^G \cap K^{\perp G})$$
are orthogonal  subspaces of $\E^{red}$ and the restriction of the pairing to $V_{\pm}^{red}$ is  $\pm$-definite. They will define a metric as long as they are maximal, i. e.,
$$ V_+^{red} \oplus V_-^{red} = \mc{E}^{red}.$$

\begin{prop}\label{prop:metric}
If the extended action $\map$ preserves $\G$ and $K$ is isotropic over $P$, then $\G$ reduces.
\end{prop}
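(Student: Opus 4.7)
The plan is to verify directly that $V_+^{red}$ and $V_-^{red}$, as defined just before the proposition, are complementary maximal $\pm$-definite subbundles of $\E^{red}$. Orthogonality of $V_+^{red}$ to $V_-^{red}$, and $\pm$-definiteness of the induced pairings, are immediate from the ambient splitting $V_+\oplus V_-$ together with the fact that the pairing on $\E^{red}$ is the one descended from $K^\perp$; so the real content is that $V_+^{red}\oplus V_-^{red}=\E^{red}$. $G$-invariance requires no extra attention: $V_\pm$ are $G$-invariant because $\G$ is, and $K$ is $G$-invariant tautologically, so the entire argument may be performed fibrewise at a point of $P$.

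The key observation is that the isotropy of $K$ forces each $\G$-eigenspace projection $\pi_\pm\colon\E\to V_\pm$ to be injective on $K$. Indeed, if $k\in K$ has $\pi_+(k)=0$ then $k\in V_-$, and isotropy combined with negative-definiteness of $\IP{\cdot,\cdot}|_{V_-}$ forces $k=0$; the argument for $\pi_-$ is symmetric. Setting $r:=\rk K$, it follows that $\pi_+(K)$ and $\pi_-(K)$ each have rank $r$ in $V_\pm$.

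Now I would compute $V_\pm\cap K^\perp$. Since $V_+\perp V_-$, an element $e_+\in V_+$ pairs to zero against all of $K$ iff it is orthogonal to $\pi_+(K)$ inside $V_+$, giving $V_+\cap K^\perp=\pi_+(K)^\perp$ with rank $n-r$ by positive-definiteness of $\IP{\cdot,\cdot}|_{V_+}$; similarly $V_-\cap K^\perp$ has rank $n-r$. Each of these injects into $\E^{red}$, since an element of $V_\pm\cap K^\perp\cap K$ is killed by the opposite projection and hence is zero by the previous paragraph.

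Finally, I would check that the images $V_+^{red}$ and $V_-^{red}$ are transverse in $\E^{red}$: if $v_+-v_-\in K$ with $v_\pm\in V_\pm\cap K^\perp$, applying $\pi_\pm$ yields $v_\pm\in\pi_\pm(K)\cap\pi_\pm(K)^\perp=0$ by the definiteness of the pairings on $V_\pm$. A dimension count then gives $\rk(V_+^{red}\oplus V_-^{red})=2(n-r)=\rk\E^{red}$, so they exhaust $\E^{red}$ and determine the reduced generalized metric $\G^{red}$. The one delicate step is this transversality argument, and it is precisely here that the hypothesis that $K$ is isotropic plays its decisive role, via the injectivity of $\pi_\pm|_K$ established above.
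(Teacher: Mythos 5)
Your proof is correct, but it takes a more computational route than the paper's. The paper disposes of the whole proposition in two lines: it reduces the claim to the identity $V_+\cap\Kperp+V_-\cap\Kperp+K=\Kperp$, observes that $V_+\cap\Kperp+V_-\cap\Kperp$ is exactly $K^{\G}=\Kperp\cap\G\Kperp$ (decompose $e\in\Kperp\cap\G\Kperp$ as $e_++e_-$ and note $e\pm\G e\in\Kperp$), and then uses that $K^{\G}$ is the orthogonal complement of $K$ inside $\Kperp$ with respect to the \emph{definite} form $\IP{\G\cdot,\cdot}$, so that $K^{\G}+K=\Kperp$. You instead work with the eigenprojections $\pi_\pm\colon\E\to V_\pm$: isotropy of $K$ gives injectivity of $\pi_\pm|_K$, from which you extract the ranks of $V_\pm\cap\Kperp$, their injections into $\E^{red}$, transversality of the images, and finally a dimension count. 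Both arguments ultimately rest on the same two ingredients (definiteness of the pairing on $V_\pm$ and isotropy of $K$), so the difference is one of packaging rather than substance; what the paper's version buys is the explicit identification of $V_+\cap\Kperp\oplus V_-\cap\Kperp$ with $K^{\G}$, which is reused immediately afterwards to describe the reduced metric as $\G$ restricted to $K^{\G}$ (Figure \ref{fig:algebroid viewpoint}) and again in the generalized K\"ahler reduction. Your version is more self-contained and makes the role of the isotropy hypothesis maximally visible, but it does not produce that model space as a byproduct.
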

\begin{proof}
According to the definition of $V_+^{red}$ and $V_-^{red}$, $ V_+^{red} \oplus V_-^{red} = \mc{E}^{red}$ \iff
\begin{equation}\label{eq:metric}
V_+ \cap \Kperp + V_- \cap \Kperp +  K = \Kperp.
\end{equation}
One can easily see that the sum of the first two spaces is $K^{\mc{G}} = \Kperp \cap \mc{G}\Kperp$, i.e.,  the orthogonal complement of $K$ in $\Kperp$. Hence \eqref{eq:metric} holds.
\end{proof}

The proof of the theorem also furnishes a  pictorial description of the reduced metric.  $\mc{G}^{red}$ on $\mc{E}^{red}$.
Let $p:K^{\perp G} \into \mc{E}^{red}$ be the natural projection and $K^{\G} = \Kperp \cap \G \Kperp$ be the metric orthogonal complement of $K$ in $\Kperp$. Then $K^\G$ is $\G$ invariant and $p:K^{\G} \into \E^{red}$ is an isomorphism. The reduced metric is nothing but the image of $\G$ under this isomorphism. If one thinks of $K$ as being the `group directions' on $\E$, then  the reduced metric is modeled in the orthogonal complement of the group directions in $\Kperp$.

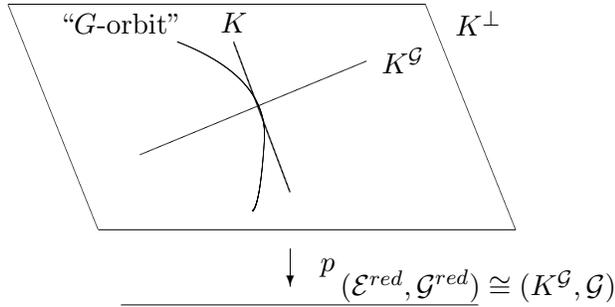
\begin{figure}[h!]
\begin{center}
\unitlength 0.5mm
\begin{picture}(140,90)(0,0)
\linethickness{0.1mm}
\multiput(5,90)(0.12,-0.3){200}{\line(0,-1){0.3}}
\put(5,90){\line(1,0){111}}
\put(29,30){\line(1,0){111}}
\multiput(116,90)(0.12,-0.3){200}{\line(0,-1){0.3}}
\linethickness{0.1mm}
\qbezier(50,80)(74.22,70.62)(73.12,53.75)
\qbezier(73.12,53.75)(72.03,36.88)(70,35)
\linethickness{0.2mm}
\multiput(65,80)(0.12,-0.32){125}{\line(0,-1){0.32}}
\put(35,85){\makebox(0,0)[cc]{``$G$-orbit''}}

\put(65,85){\makebox(0,0)[cc]{$K$}}

\put(110,75){\makebox(0,0)[cc]{$K^{\G}$}}

\put(130,85){\makebox(0,0)[cc]{$K^{\perp}$}}

\linethickness{0.1mm}
\put(80,15){\line(0,1){10}}
\put(80,15){\vector(0,-1){0.12}}
\put(90,20){\makebox(0,0)[cc]{$p$}}

\put(130,15){\makebox(0,0)[cc]{$(\E^{red},\G^{red}) \cong (K^{\G},\G)$}}

\linethickness{0.1mm}
\multiput(40,50)(0.29,0.12){208}{\line(1,0){0.29}}
\linethickness{0.1mm}
\put(35,10){\line(1,0){95}}
\end{picture}
\caption{The generalized metric on the reduced algebroid is modeled on the orthogonal complement of $K$ inside $\Kperp$.}
\label{fig:algebroid viewpoint}
\end{center}
\end{figure}

 \subsection{Metric reduction as seen from $TM$}
 We have seen that a generalized metric $\G$ on an exact algebroid over $M$ gives rise to an actual metric $g$ on $M$. Hence, if an extended action preserves a generalized metric,  we can form the reduced metric $\G^{red}$ on the reduced Courant algebroid $\E^{red}$ and obtain a metric $g^{red}$ on the reduced manifold $M^{red}$.
\begin{equation*}
\xymatrix{(\E,\G)\ar@{~>}[r]^{//}\ar[d]^{\pi}&(\E^{red},\G^{red})\ar[d]^{\pi}\\
(M,g)\ar@{~>}[r]^{//}& (M^{red},g^{red})}
\end{equation*}

As we set out to describe $g^{red}$ in terms of $g$ and the action, we see that there is a price to be paid when one tries to describe generalized objects in terms of the tangent bundle alone. Here we will see that instead of describing $g^{red}$ in terms of the space orthogonal to the $G$-orbits, one has to consider an appropriate space transversal to the $G$-orbits, but slanted according to the action and the 2-form $b$ making up the metric.

 In order to describe the metric $g^{red}$, we fix an invariant splitting $\E = TM \oplus T^*M$, so that the generalized metric is determined by the metric $g$ and a 2-form $b$. Since $\G$ is preserved by the action, one can always find such a splitting, for example, the metric splitting has the properties above. Since we are assuming that $K$ is isotropic over $P$, a leaf of the distribution $\Delta_b$, the extended action on $\E$ gives rise to an isotropic  trivially  extended action on the restricted Courant algebroid over $P$, $\E|_P$, similarly to Example \ref{ex:momented}. Therefore, the action over $P$  is determined by a map $\frak{g} \into \Gamma(TP\oplus T^*P)$, with $ \gamma \mapsto X_{\gamma} + \xi_{\gamma}$.

Recall that $g^{red}$ is just the pushforward  via $\pi$ of the natural pairing on $V^{red}_+$, the $+1$-eigenspace of $\G^{red}$, to $TM^{red}$. But according to the description of $\E^{red}$ from the previous section, we see that $V^{red}_+ = V_+\cap K^{\G} = V_+ \cap \Kperp \cap \G \Kperp$, i.e.,
$$V^{red}_+ = \{Y + (g+b)(Y) \in TP \oplus T^*P: g(Y,X_{\gamma}) +b(Y,X_{\gamma}) + \xi_{\gamma}(Y) =0,~~  \forall \gamma \in \frak{g}\}.$$
This expression shows that $g^{red}$ corresponds to the metric $g$ restricted to a subspace $\tau \subset TP$ transversal to the $G$-orbits which is  slanted from the orthogonal complement of the $G$-orbits by $b$ and $\xi_{\gamma}$ given by
\begin{equation}\label{eq:tau}
 \tau_+ =\{Y \in TP: g(Y,X_{\gamma})= -b(Y,X_{\gamma}) - \xi_{\gamma}(Y),~~  \forall \gamma \in \frak{g}\}
 \end{equation}
\begin{figure}[h!]
\begin{center}

\unitlength 0.5mm
\begin{picture}(140,90)(0,0)
\linethickness{0.1mm}
\multiput(5,90)(0.12,-0.3){200}{\line(0,-1){0.3}}
\put(5,90){\line(1,0){111}}
\put(29,30){\line(1,0){111}}
\multiput(116,90)(0.12,-0.3){200}{\line(0,-1){0.3}}
\linethickness{0.1mm}
\qbezier(50,80)(74.22,70.62)(73.12,53.75)
\qbezier(73.12,53.75)(72.03,36.88)(70,35)
\linethickness{0.1mm}
\multiput(65,80)(0.12,-0.32){125}{\line(0,-1){0.32}}
\put(35,80){\makebox(0,0)[cc]{$G$-orbit}}

\put(65,85){\makebox(0,0)[cc]{$\psi(\frak{g})$}}

\put(112,75){\makebox(0,0)[cc]{$\psi(\frak{g})^{\perp}$}}

\put(130,85){\makebox(0,0)[cc]{$TP$}}

\linethickness{0.1mm}
\put(80,15){\line(0,1){10}}
\put(80,15){\vector(0,-1){0.12}}
\put(90,20){\makebox(0,0)[cc]{$q$}}

\put(130,15){\makebox(0,0)[cc]{$(TM^{red},g^{red}) \cong (\tau,g)$}}

\linethickness{0.1mm}
\multiput(40,50)(0.29,0.12){208}{\line(1,0){0.29}}
\linethickness{0.1mm}
\put(35,10){\line(1,0){95}}
\linethickness{0.1mm}
\multiput(55,45)(0.12,0.14){292}{\line(0,1){0.14}}
\put(95,85){\makebox(0,0)[cc]{$\tau$}}

\end{picture}
\caption{The metric  induced on $TM^{red}$ is modeled on a space transversal to the $G$-orbits but not necessarily orthogonal to them.}
\end{center}
\end{figure}
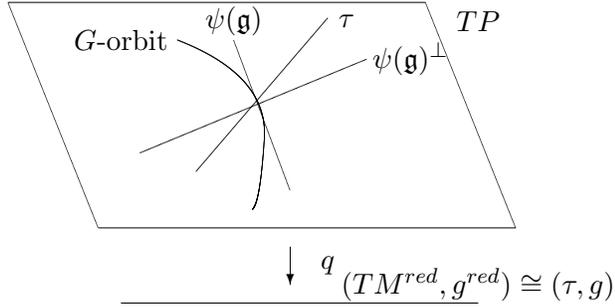

 This feature that the Riemannian model for the reduced manifold is not given by the orthogonal complement of the orbits, but by a different  transversal space was observed before by L\"ubke and Teleman \cite{LuTe95}, Grantcharov, Papadopolous and Poon \cite{GPP02}  and Hitchin \cite{Hi05}. Our approach shows that this puzzling behaviour is nothing but a manifestation on the tangent bundle level of a simpler behaviour of actions on Courant algebroids.

We finish this section on reduction of generalized metrics computing  the curvature of the metric splitting of $\E^{red}$. In what follows we use the metric splitting $\E \cong \G T^*M \oplus T^*M$ to describe the extended action over $P$ as a map $\gg \mapsto X_{\gg} +\xi_{\gg}$, therefore determining a 1-form $\xi \in \Omega^{1}(M,\Gg^*)$. We also use the metric $g$ induced on $P \subset M$ to form a connection $\theta \in \Omega^{1}(P,\Gg)$ on $P$, seen as a principal $G$-bundle over $M^{red} = P/G$, i. e., $\theta(X) =0$ \iff\ $X$ is orthogonal to the $G$-orbit.

\begin{prop}\label{prop:metric curvature} If an extended action $\map$ preserves a generalized metric $\G$, $K$ is isotropic over $P$  and the curvature of the metric splitting of $\E$ over $P$  is $H$, then the curvature of the metric splitting of $\E^{red}$ is 
\begin{equation}\label{eq:severa}
H^{red} =  H + dB = (H + \IP{F,\xi})|_{\tau_+},
\end{equation}
where $\tau_+$ is distribution transversal to the $G$-orbits given in \eqref{eq:tau}, $\theta_+$ is the connection for which $\tau_+$ is the horizontal distribution, $B$ is given by \eqref{eq:B-field straighting} for this connection and $F$ is the curvature of $\theta_+$.
\end{prop}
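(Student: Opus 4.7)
The plan is to apply the $B$-field straightening from Example~\ref{ex:momented} directly, and then to verify that the resulting splitting on $\E^{red}$ is itself the \emph{metric} splitting of $\E^{red}$. Since $\G$ is preserved by the action, I would work in the metric splitting of $\E$, which is automatically $G$-invariant. In this splitting $\E|_P \cong TP \oplus T^*P$ carries the $H$-Courant bracket, the restricted action takes the form $\gg \mapsto X_\gg + \xi_\gg$ with $i_{X_\gg}H|_P = d\xi_\gg$, and $V_+|_P$ is the graph of $g$. Using the connection $\theta_+$ whose horizontal distribution is $\tau_+$, I form the 2-form $B$ of \eqref{eq:B-field straighting} and apply the associated $B$-transform. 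As computed at the end of Example~\ref{ex:momented}, the new splitting has infinitesimal generators $X_\gg$ lying purely in $TP$, is $G$-invariant, and therefore descends to a splitting of $\E^{red}$ whose curvature is the push-down of the basic 3-form $H + dB$. This yields the first equality $H^{red} = H + dB$.

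The main point is then to check that this descended splitting coincides with the metric splitting of $\E^{red}$, equivalently that the induced 2-form $b^{red}$ vanishes. Under the $B$-transform, $V_+|_P$ becomes the graph of $g + B$ as a bilinear form on $TP$. Its $G$-invariant, $K^\perp$-component projects onto $TM^{red}$ through the transversal $\tau_+$, so $b^{red}$ equals the restriction $B|_{\tau_+\times\tau_+}$. But every term of $B$ contains a wedge factor $\theta_i$, and $\theta_i|_{\tau_+} = 0$ by construction of $\theta_+$; hence this restriction is zero. This is the crux of the argument and the reason why the choice of connection is forced to be $\theta_+$ rather than the orthogonal connection: only then does the straightening $B$-field also produce the metric splitting after reduction.

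For the second equality, I would expand
\[
dB = \sum_i \bigl(d\theta_i \wedge \xi_i - \theta_i \wedge d\xi_i\bigr) + \tfrac{1}{2}\sum_{i,j} d\bigl(\xi_j(X_i)\bigr) \wedge \theta_i \wedge \theta_j + \tfrac{1}{2}\sum_{i,j} \xi_j(X_i) \bigl(d\theta_i \wedge \theta_j - \theta_i \wedge d\theta_j\bigr).
\]
Every summand except $\sum_i d\theta_i \wedge \xi_i$ still contains an explicit wedge factor $\theta_k$, and therefore vanishes when evaluated on horizontal triples. On horizontal vectors, $d\theta_i$ coincides with the curvature component $F_i$ of $\theta_+$ (since $[\theta,\theta]$ is vertical), so
\[
(dB)|_{\tau_+} = \sum_i F_i \wedge \xi_i = \langle F,\xi\rangle|_{\tau_+},
\]
which is the second equality. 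The principal obstacle is the middle paragraph: identifying the straightened splitting with the metric splitting on the quotient, which depends on the precise interplay between $B$, $\tau_+$, and the characterization of the latter via the pairing condition \eqref{eq:tau}.
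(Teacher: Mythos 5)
Your proposal is correct and shares the paper's overall architecture---apply the $B$-field straightening of Example~\ref{ex:momented} with $B$ built from the connection $\theta_+$, identify the descended splitting with the metric splitting of $\E^{red}$, and restrict $H+dB$ to the $\theta_+$-horizontal distribution---but you carry out the crucial middle step by a genuinely different and more economical route. The paper characterizes the metric splitting of $\E^{red}$ through the subspace $\tau=\G(T^*M^{red})\subset K^{\G}$, introduces an auxiliary connection $\theta$ and a $\Gg$-valued $1$-form $\chi$ defined by \eqref{eq:chi}, proves Lemma~\ref{lem:theta and theta+} (that $\theta_+=\theta-\chi$), and then verifies by explicit computation that the $1$-form part of $e^{B}\tau$ vanishes on $\theta_+$-horizontal vectors. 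You instead use the equivalent characterization ``$b^{red}=0$'' and track $V_+$ through the change of splitting: since $\pi(V_+\cap \Kperp)=\tau_+$ (a splitting-independent statement, which is exactly \eqref{eq:tau}), the space $V_+^{red}$ is the graph over $TM^{red}\cong\tau_+$ of the push-down of $(g\mp B)|_{\tau_+}$, so $b^{red}=\mp B|_{\tau_+\times\tau_+}$, which vanishes because every term of \eqref{eq:B-field straighting} carries a factor $\theta_{+i}$ and $\theta_+|_{\tau_+}=0$. (The sign in front of $B$ depends on the convention for the change of splitting---the paper writes the new description of a section as $Y+\eta-i_YB$---but this is immaterial since the restriction vanishes either way.) Your route dispenses with $\theta$, $\chi$, and Lemma~\ref{lem:theta and theta+} altogether and makes transparent why the connection must be $\theta_+$ rather than the $g$-orthogonal one; what the paper's longer detour buys in exchange is the explicit description of $\G(T^*M^{red})$ inside $K^{\G}$ and the relation $\theta_+=\theta-\chi$, which have some independent interest. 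Your concluding computation $(dB)|_{\tau_+}=\IP{F,\xi}|_{\tau_+}$ agrees with the paper's.
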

\begin{proof}
We will start the proof by relating the metric splitting of the reduced algebroid with the subspace $\tau_+$.

We saw that fiberwise $\E^{red} \cong K^\G/G = (\Kperp \cap \G \Kperp|_P)/G$ and the metric on $\E^{red}$ is determined by the metric on $K^\G|_P$. With this identification we have
$$\E^{red} \cong \{Y+ \eta : g(Y,X_\gg) + g(\eta,\xi_{\gg})  =  0 \mbox{ and } \xi_{\gg}(Y) + \eta(X_{\gg}) =0 ~~~~~\forall \gg \in \Gg\}.$$
And $T^*M^{red}$ is identified with $\tau^*: = \ker(\pi:\E^{red} \into TM^{red})$, i. e.,
$$T^*M^{red} \cong \tau^* =\{Y+ \eta : g(Y,X_\gg) + g(\eta,\xi_{\gg})  =  0;~  \xi_{\gg}(Y) + \eta(X_{\gg}) =0\mbox{ and } Y \in \psi(\Gg)  ~~~~~\forall \gg \in \Gg\}.$$
And  the metric splitting of $\E^{red}$ is given by identifying $TM^{red}$ with $\tau= \G(T^*M^{red})$:
$$TM^{red}\cong \tau = \{Y+ \eta : g(Y,X_\gg) + g(\eta,\xi_{\gg})  =  0; ~ \xi_{\gg}(Y) + \eta(X_{\gg}) =0\mbox{ and } g^{-1}\eta \in \psi(\Gg)  ~~~~~\forall \gg \in \Gg\}.$$

The distribution $\tau$ above allows us to define a distribution $\tau_T \subset TP$ transversal to the $G$-orbits, given by $\pi(\tau)$, so that given $Y \in TM_{red}$ there is a unique $Y^h\in TP$, a lift of $Y$, and $\eta \in \Omega^1(P)$ such that $Y^h + \eta\in \tau$. Further, the condition $g^{-1}(\eta) \in \psi(\Gg)$ plus $G$-invariance allows us to define $\chi \in \Omega^1(M^{red};\Gg)$ by the condition that $Y^h+\eta \in \tau$ implies that $\eta = g(\chi(Y))$, where we are identifying $\Gg$ with its image in $\mathcal{X}(P)$. With these conventions we have that
$$\tau = \{Y^h+ g(\chi(Y)):Y \in TM_{red}\}$$
and $Y^h$ and $\chi$ are defined by the conditions
\begin{equation}\label{eq:chi}
g(Y^h,X_\gamma) + \xi_\gamma(\chi(Y)) = g(\chi(Y),X_\gamma) + \xi_\gamma(Y^h) =0 \qquad \forall\gamma \in \Gg.
\end{equation}

Let $\theta$ be the connection on $P$ for which $\tau_T$ is the horizontal space. Then we can describe $\tau_+$ and $\theta_+$ in terms of $\theta$ and $\chi$:

\begin{lemma}\label{lem:theta and theta+} The space $\tau_+$ is given by
$$\tau_+ =\{ Y^h + \chi(Y): Y \in T M^{red}\}$$
and hence $\theta_+ = \theta - \chi$.
\end{lemma}
\begin{proof}
We will prove that $V_+ \cap K^\G $ is the space
\begin{equation}\label{eq:V+red}
v_+ = \{ Y^h + \chi(Y) + g( Y^h + \chi(Y)): Y \in T M^{red}\}
\end{equation}
and since $\tau_+$ is the image of the projection of $V_+\cap K^\G$ to $TP$, the first claim follows. Now $V_+\cap K^\G$ and $v_+$ have the same dimension, so it is enough to prove one inclusion. It is clear that $v_+$ sits inside $V_+$, so we only have to prove that $v_+ \subset \Kperp \cap \G \Kperp$, but since $v_+$ is $G$-invariant, it is enough to prove that $v_+ \subset \Kperp$. For $X_\gamma+\xi_\gamma \in K$, we have
\begin{align*}
\IP{Y^h + \chi(Y) + g( Y^h + \chi(Y)),X_\gamma+\xi_\gamma} &= \xi_{\gamma}(Y^h + \chi(Y)) + g(Y^h + \chi(Y),X_\gamma)\\
&=  \xi_{\gamma}(Y^h) - g(X_\gamma,Y^h) + g(Y^h,X_{\gamma}) -\xi_{\gamma}(Y^h)\\
&=0,
\end{align*}
where we have used \eqref{eq:chi} for the second equality.
\end{proof}

\noindent
{\it Proof of proposition \ref{prop:metric curvature} continued}. Following Example \ref{ex:momented}, we perform an overall $B$-field transform by the  2-form
$$B =  \sum_i \theta_{+i} \xi_i+\frac{1}{2}\sum_{ij}\xi_j(X_i)\theta_{+i} \wedge \theta_{+j},$$
where we have chosen a basis for $\Gg$ and dual basis for $\Gg^*$ and $\theta_{i+}$ are the components of the connection in this basis. The $B$-field transform of $\tau$ is given by
$$e^B\tau =\{Y^h +g(\chi(Y)) - i_{Y^h}B:Y\in TM^{red}\}$$
Since in this splitting $\Kperp = TP \oplus T^*M^{red}$, the 1-form part of a $G$-invariant section of $e^B\tau$ is automatically basic, so to prove that this form vanishes we must prove that when evaluated in any vector in a fixed distribution transverse to the $G$-orbits it results zero. We pick $Z = Z^h + \chi(Z)$ a vector in the horizontal distribution determined by $\theta_+$, then
\begin{align*}
g(\chi(Y),Z) - B(Y^h,Z)& = g(\chi(Y),Z^h+\chi(Z)) - \sum_i\theta_{+i}(Y^h)\xi_i(Z^h + \chi(Z))\\
&= \sum_i \chi_i(Y)(g(X_i,Z^h)+g(X_i,\chi(Z))) +\chi_i(Y)\xi_i(Z^h) + \chi_i(Y)\xi_i(\chi(Z))\\
&= \sum_i \chi_i(Y)(g(X_i,Z^h)+\xi_i(\chi(Z) ) + g(X_i,\chi(Z))+\xi_i(Z^h))\\
&=0
\end{align*}
where $\chi_i$ are the components of $\chi$ in the chosen basis, i.e., $\chi = \sum\chi_i X_i$, in the first equality we used that $\theta_{i+}(Z) =0$, as $Z$ is horizontal, so most terms in $i_Z B$ vanish, in the second equality we expanded $\chi(Y)$ into its components and used Lemma \ref{lem:theta and theta+}  to conclude that $\theta_{+i}(Y^h) = -\chi_i(Y)$ and in the last equality we used \eqref{eq:chi}.

Therefore for the splitting given by the $B$-field transform by $B$, the space $\tau$ agrees with $T M^{red}$, hence this is the metric splitting and the corresponding 3-form $H + dB$ is the curvature of the metric splitting.

Finally, if we restrict this 3-form to the horizontal distribution according to $\theta_+$, we get that $\theta_+|_{\tau_+}=0$, hence
$$H_{red} = (H+dB)|_{\tau_+} = (H + \IP{d\theta_{+},\xi}|_{\tau_+} = (H + \IP{d\theta_+ + \frac{1}{2}[\theta_{+}, \theta_{+}], \xi})|_{\tau_+} = (H + \IP{F,\xi})|_{\tau_+}$$
\end{proof}

\begin{remark}
The same proof holds if one uses $\tau_- = \pi(V_-\cap K^\G) \subset TP$ and $\theta_-$ the connection for which $\tau_-$ is the horizontal distribution. In this case, the $3$-form is still given by equation \eqref{eq:severa}, but now $F$ is the curvature of $\theta_-$.  
\end{remark}

\subsection{Reduction of strong KT structures}

Assume that an extended action $\map:\frak{a} \into \Gamma(\mc{E})$ preserves a strong KT structure $(\mc{G},\mc{I})$ on $\E$. Let $P$ be a leaf of the distribution $\Delta_b$ over which $K$ is isotropic. According to Proposition \ref{prop:metric}, we can reduce the metric. Now we study under which conditions $\mc{I}$ reduces so that $(\G^{red},\mc{I}^{red})$ is a strong KT structure on the reduced algebroid.

Since the metric reduces, we can try and define a reduced  complex structure $\mc{I}^{red}$ on $V_+^{red}$ by giving its $+i$-eigenspace:
\begin{equation}
L^{red} = p(L \cap K^{\perp}_{\C}),
\end{equation}
where $L\subset V_+\tensor \C$ is the $+i$-eigenspace of $\mc{I}$,   $K^{\perp}_{\C} = \Kperp \tensor \C$ and $p:\Kperp \into \E^{red}$ the natural projection. Then $L^{red}$ determines an almost complex structure on $V_+^{red}$ \iff 
\begin{equation}\label{equation}
L^{red} + \overline{L^{red}} = V_+^{red},
\end{equation}
in which case $L^{red} \cap \overline{L^{red}} = \{0\}$ holds trivially, since \eqref{equation} makes  $L^{red}$ a maximal isotropic subspace of $V_+^{red} \tensor \C$.

\begin{theo}\label{SKT reduction}
Let $\map:\frak{a} \into \Gamma(\mc{E})$ be an extended action preserving a strong KT structure $(\mc{G},\mc{I})$ on $\mc{E}$ and let $P$ be a leaf of the distribution $\Delta_b$ over which $K$ is isotropic. Then the  strong KT structure on $\mc{E}$ reduces to a strong KT structure on $\mc{E}^{red}$ \iff\
$$\mc{I}(\Kperp \cap V_+) = \Kperp \cap V_+ \mbox{ over } P.$$
\end{theo}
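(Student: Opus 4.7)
The plan is to verify in turn that $L^{red}$ is isotropic, that under the stated hypothesis it is maximal (hence defines an almost complex structure $\mc{I}^{red}$ on $V_+^{red}$), and that it is closed under the reduced Courant bracket. As a preliminary observation, because $K$ is isotropic over $P$ while $V_+$ is positive definite one has $K\cap V_+=0$ there; decomposing any vector in $K^\perp$ along $V_+\oplus V_-$ and using that $\G$ acts by $\pm 1$ on $V_\pm$ yields
\[
K^{\G}=(V_+\cap K^\perp)\oplus(V_-\cap K^\perp)
\]
pointwise over $P$. Combined with Proposition \ref{prop:metric} this gives $V_+^{red}=p(V_+\cap K^\perp)$, with $p$ restricting to a $G$-equivariant isomorphism $V_+\cap K^\perp\to V_+^{red}$; after complexifying, the same $p$ carries $L\cap K^{\perp}_{\C}$ bijectively onto $L^{red}$ inside $V_+^{red}\otimes\C$.

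The equivalence then becomes fibrewise linear algebra on $P$. Since $L^{red}$ is automatically isotropic (as $L$ is and $p$ preserves the pairing), it is the $+i$-eigenspace of an orthogonal almost complex structure on $V_+^{red}$ if and only if $L^{red}+\overline{L^{red}}=V_+^{red}\otimes\C$; via the isomorphism $p$ this rephrases as $(L\cap K^{\perp}_{\C})+(\overline{L}\cap K^{\perp}_{\C})=(V_+\cap K^\perp)\otimes\C$. For any $v\in V_+\cap K^\perp$ the decomposition $v=v^{1,0}+v^{0,1}$ in $L\oplus\overline{L}$ is unique with $v^{1,0}=\tfrac12(v-i\mc{I}v)$. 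If $\mc{I}$ preserves $V_+\cap K^\perp$ then both components lie in $K^{\perp}_{\C}$, giving the identity; conversely, any decomposition of $v$ with summands in $K^{\perp}_{\C}$ must agree with this canonical one, forcing $\mc{I}v\in K^\perp$, and together with $\mc{I}V_+=V_+$ this yields $\mc{I}(V_+\cap K^\perp)=V_+\cap K^\perp$.

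Under this condition I would then show that $L^{red}$ is closed under the reduced bracket. Given $e_1,e_2\in\Gamma(L^{red})$, the equivariant isomorphism of the preliminary step produces $G$-invariant lifts $\tilde e_i\in\Gamma(L\cap K^{\perp}_{\C}|_P)^G$. Using a partition of unity and smoothness of the subbundle $L\subset\E\otimes\C$, each $\tilde e_i$ extends to a global section $\mathtt{e}_i\in\Gamma(L)$ on $M$. Integrability of $L$ forces $\Cour{\mathtt{e}_1,\mathtt{e}_2}\in\Gamma(L)$, while the reduction construction recalled in Section \ref{sec:actions} guarantees that $\Cour{\mathtt{e}_1,\mathtt{e}_2}|_P$ is a legitimate representative of the reduced bracket and in particular lies in $K^\perp|_P$. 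Thus $\Cour{\mathtt{e}_1,\mathtt{e}_2}|_P\in L\cap K^{\perp}_{\C}|_P$, and projecting via $p$ places $\Cour{e_1,e_2}^{red}$ inside $L^{red}$.

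The main subtlety is this last step: one must exploit the freedom in the extension to arrange $\mathtt{e}_i\in\Gamma(L)$ globally, so that integrability of $L$ can be invoked, while still satisfying the hypotheses of the reduction theorem, which a priori controls only the restriction of the bracket to $P$. Once these three steps are assembled, orthogonality of $\mc{I}^{red}$ follows from $L^{red}$ being maximal isotropic, and together with $\G^{red}$ from Proposition \ref{prop:metric} this constitutes a strong KT structure on $\E^{red}$.
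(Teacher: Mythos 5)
Your proposal is correct and follows essentially the same route as the paper: define $L^{red}=p(L\cap K^{\perp}_{\C})$, reduce the maximality condition $L^{red}+\overline{L^{red}}=V_+^{red}\otimes\C$ to the $\mc{I}$-invariance of $V_+\cap\Kperp$ by fibrewise linear algebra, and obtain integrability by extending invariant lifts to global sections of $L$ and restricting the bracket to $P$. Your use of the isomorphism $p|_{V_+\cap\Kperp}$ and the canonical decomposition $v=\tfrac12(v-i\mc{I}v)+\tfrac12(v+i\mc{I}v)$ is just a slightly more explicit packaging of the paper's step of intersecting with $V_+$ and identifying $L\cap\Kperp+\overline{L}\cap\Kperp$ with $\mc{I}(V_+\cap\Kperp)\cap(V_+\cap\Kperp)$.
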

\begin{proof}
First  we prove that $\mc{I}$ reduces to an almost complex structure on $V_+^{red}$, i. e,
$$ L^{red} \oplus  \overline{L^{red}} = V_+^{red},$$
holds \iff\ $\Kperp \cap V_+$ is invariant under $\mc{I}$. Spelling out, the above is equivalent to
$$\frac{L \cap \Kperp + K}{K} + \frac{\overline{L} \cap \Kperp  + K}{K} = \frac{V_+ \cap \Kperp + K}{K} $$
which is equivalent to
$$ L \cap \Kperp + \overline{L} \cap \Kperp + K= V_+ \cap \Kperp + K.$$
Taking intersection with $V_+$, we see that the above is equivalent to
$$ L \cap \Kperp + \overline{L} \cap \Kperp = V_+ \cap \Kperp .$$
But the space on the left hand side is just $\mc{I}(V_+ \cap \Kperp) \cap V_+ \cap \Kperp$, hence the above is equivalent to
$$ \mc{I}(V_+ \cap \Kperp) \cap (V_+ \cap \Kperp)  = V_+ \cap \Kperp,$$
which is the case \iff\ $\mc{I}(V_+ \cap \Kperp) =  V_+ \cap \Kperp$.

So, to finish the proof  we only have to show that the almost complex structure $\mc{I}^{red}$ defined above is integrable, i. e., $L^{red}$ is closed under the bracket. This follows immediately from the integrability of $L$, since the bracket on $\E^{red}$ is determined by the bracket on $\E$. Indeed, given two sections $e_1, e_2 \in \Gamma(L^{red})$, let $\tilde{e_1}$ and $\tilde{e_2}$ be $G$-invariant lifts of these sections to $\Gamma(L \cap \Kperp|_P)$.  Then we can extend $\tilde{e_1}$ and $\tilde{e_2}$ to $\mathtt{e}_1,\mathtt{e}_2 \in \Gamma(L)$, therefore $\Cour{\mathtt{e}_1,\mathtt{e}_2} \in \Gamma(L)$  and hence
$\Cour{e_1,e_2} = p(\Cour{\mathtt{e}_1,\mathtt{e}_2}|_P) \in L^{red}$.

\end{proof}

\begin{cor}\label{HKT reduction}
Let $\map:\frak{a} \into \Gamma(\mc{E})$ be an extended action preserving a hyper KT structure $(\mc{G},\mc{I},\mc{J},\mc{K})$ on $\mc{E}$ and let $P$ be a leaf of the distribution $\Delta_b$ over which $K$ is isotropic. Then the  hyper KT structure on $\mc{E}$ reduces to a hyper KT structure on $\mc{E}^{red}$ \iff\
$$\mc{I}(\Kperp \cap V_+) = \mc{J}(\Kperp \cap V_+) =\mc{K}(\Kperp \cap V_+) =\Kperp \cap V_+ \mbox{ over P.} $$
\end{cor}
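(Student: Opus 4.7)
The plan is to reduce the corollary to three applications of Theorem \ref{SKT reduction}, one for each of the complex structures $\mc{I},\mc{J},\mc{K}$, and then to verify that the quaternionic relation descends to the reduced algebroid.

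First, observe that a \emph{hyper} KT structure on $\E^{red}$ is precisely the data of the reduced generalized metric $\G^{red}$ together with three reduced complex structures $\mc{I}^{red},\mc{J}^{red},\mc{K}^{red}$ on $V_+^{red}$ satisfying $\mc{I}^{red}\mc{J}^{red}=\mc{K}^{red}$. Since the extended action preserves each of $\mc{I},\mc{J},\mc{K}$ and $K$ is isotropic over $P$, Proposition \ref{prop:metric} reduces $\G$, so the common eigenspace $V_+^{red}=p(V_+\cap K^{\perp G})$ is well-defined. Theorem \ref{SKT reduction} applied individually to each of the three strong KT structures $(\G,\mc{I})$, $(\G,\mc{J})$, $(\G,\mc{K})$ then shows that each one reduces to an integrable orthogonal complex structure on $V_+^{red}$ if and only if that particular complex structure preserves $\Kperp\cap V_+$ over $P$. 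This yields exactly the three invariance conditions in the statement as the necessary and sufficient conditions for the three reductions to exist.

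What remains is to check that the quaternionic identity $\mc{I}^{red}\mc{J}^{red}=\mc{K}^{red}$ holds on $V_+^{red}$. For this, recall that each reduced complex structure is defined on its $+i$-eigenspace by $L_A^{red}=p(L_A\cap K^{\perp G}_{\C})$ for $A\in\{\mc{I},\mc{J},\mc{K}\}$, and the projection $p$ commutes with the action of any operator that preserves $\Kperp\cap V_+$ and descends. Concretely, given $e\in V_+^{red}$ with lift $\tilde e\in V_+\cap K^{\perp G}$, the invariance conditions imply $\mc{J}\tilde e\in V_+\cap K^{\perp G}$ and then $\mc{I}\mc{J}\tilde e=\mc{K}\tilde e\in V_+\cap K^{\perp G}$, so that $\mc{I}^{red}\mc{J}^{red}e=p(\mc{I}\mc{J}\tilde e)=p(\mc{K}\tilde e)=\mc{K}^{red}e$. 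This is a purely formal check once all three complex structures are known to descend.

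The only conceivable obstacle is the question of well-definedness: one must ensure that the formula $\mc{A}^{red}p(\tilde e)=p(\mc{A}\tilde e)$ is independent of the lift $\tilde e$, but this is guaranteed exactly by $\mc{A}(\Kperp\cap V_+)=\Kperp\cap V_+$ together with the fact that modifying $\tilde e$ by an element of $K$ (modulo which we pass to $\E^{red}$) changes $\mc{A}\tilde e$ by an element of $\mc{A}(K\cap V_+)\subset\Kperp\cap V_+$, which vanishes in $\E^{red}$ since $K$ is isotropic and contained in $\Kperp$ over $P$. The conclusion, both directions, then follows immediately from Theorem \ref{SKT reduction}.
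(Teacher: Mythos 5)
Your proposal is correct and matches the paper's (implicit) argument: the corollary is stated without separate proof precisely because it amounts to applying Theorem \ref{SKT reduction} to each of $(\G,\mc{I})$, $(\G,\mc{J})$, $(\G,\mc{K})$ and observing that the pointwise identity $\mc{I}\mc{J}=\mc{K}$ descends since all three operators preserve $V_+\cap\Kperp$. One small simplification to your well-definedness paragraph: since $K$ is isotropic over $P$ and the pairing is positive definite on $V_+$, one has $K\cap V_+=\{0\}$, so the lift of an element of $V_+^{red}$ inside $V_+\cap K^{\perp G}$ is in fact unique and there is no ambiguity to control.
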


\subsection{Strong KT reduction as seen from $TM$}
We just saw that the metric model for $TM^{red}$ is given by $g$ restricted to the subspace $\tau_+ \subset TP$ defined in \eqref{eq:tau} as  $\tau_+ = \pi(\Kperp \cap V_+)$. Also, the complex structure on $TM$ determined by the strong KT structure on the Courant algebroid is nothing but the push forward of $\mc{I}:V_+ \into V_+$ to $TM$. These two facts together allow us to rephrase Theorem \ref{SKT reduction} as follows.

\begin{cor}
Let $\E$, $\map$ and $P$ be as in Theorem \ref{SKT reduction}. A  strong KT structure $(g,I)$ on $M$ preserved by $\map$ reduces \iff\ $\tau_+$ defined in \eqref{eq:tau} is invariant under $I$.
\end{cor}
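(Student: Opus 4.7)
The plan is to reduce the statement to Theorem \ref{SKT reduction} by translating the condition from the Courant algebroid side to the tangent bundle side via the isomorphism $\pi|_{V_+}: V_+ \to TM$. The two relevant facts are: (i) by Proposition \ref{SKT equivalent}, a strong KT structure $(g,I)$ on $M$ is equivalent to a strong KT structure $(\G,\mc{I})$ on the exact Courant algebroid with characteristic class $[-d^c\omega]$, so $\map$-invariance of $(g,I)$ is the same as $\map$-invariance of $(\G,\mc{I})$; and (ii) by construction in the paragraph following Proposition \ref{SKT equivalent}, the complex structure $I$ is defined precisely as the push-forward of $\mc{I}$ via the bundle isomorphism $\pi|_{V_+}$, so $\pi \circ \mc{I} = I \circ \pi$ on $V_+$.

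First, I would invoke Theorem \ref{SKT reduction} to conclude that reduction takes place if and only if $\mc{I}(\Kperp \cap V_+) = \Kperp \cap V_+$ over $P$. Second, I would use the observation made immediately before the corollary that $\tau_+ = \pi(\Kperp \cap V_+)$, together with the intertwining property $\pi \circ \mc{I} = I \circ \pi$. Since $\pi|_{V_+}$ is a linear isomorphism of fibres, applying it to the identity $\mc{I}(\Kperp \cap V_+) = \Kperp \cap V_+$ gives $I(\tau_+) = \tau_+$, and conversely an $I$-stable $\tau_+$ pulls back under $\pi|_{V_+}$ to an $\mc{I}$-stable $\Kperp \cap V_+$. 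This yields the equivalence claimed in the corollary.

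Finally, when the condition is met, the resulting strong KT structure on $\E^{red}$ produced by Theorem \ref{SKT reduction} translates, via Proposition \ref{SKT equivalent} applied to $\E^{red}$, into an honest strong KT structure on $M^{red}$, which is the reduced structure $(g^{red}, I^{red})$ described by $g^{red}$ from Proposition \ref{prop:metric} and $I^{red}$ induced by $I|_{\tau_+}$ under the identification $\tau_+/G \cong TM^{red}$. There is no serious obstacle here: the entire content of the corollary is that the fibrewise isomorphism $\pi|_{V_+}$ transports the Courant-algebroid reduction criterion faithfully to the tangent bundle, so the proof is a direct unpacking rather than a new computation.
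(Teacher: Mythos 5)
Your proposal is correct and follows exactly the paper's own route: the paper states this corollary as a direct rephrasing of Theorem \ref{SKT reduction}, using precisely the two facts you identify, namely that $\tau_+ = \pi(\Kperp\cap V_+)$ and that $I$ is the push-forward of $\mc{I}$ under the isomorphism $\pi|_{V_+}$. Nothing further is needed.
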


So, in this case, not only does $\tau_+$ furnish a Riemannian model for $TM^{red}$, but also the complex structure making it a strong KT manifold. Observe however that from this point of view the integrability of the complex structure on $M^{red}$ is no longer obvious.

We finish remarking that if instead of using $V_+$ to define the strong KT structure on a Courant algebroid we had used $V_-$, all the results in this section would still hold with small adjustments. Most notably, the space modeling the Hermitian structure on $TM^{red}$ would be given by
$$ \tau_- = \{Y \in TP: g(Y,X_{\gamma}) = b(Y,X_{\gamma}) + \xi_{\gamma}(Y),~~  \forall \gamma \in \frak{g}\}$$

\subsection{Reduction of generalized K\"ahler structures}\label{sec:gk instantons}
Reduction of \gks s was studied in \cite{BCG05,BCG07} using the point of view of reduction of \gcss. Their proof of the theorem on \gk\ reduction amounts to proving the following result.

\begin{theo}\label{gk reduction}{\em (Generalized K\"ahler reduction \cite{BCG05,BCG07}):}
Let $\E$, $\map$, and $P$ be as in Theorem~\ref{SKT reduction}, with
$\rho(\frak{a})=K$ isotropic along $P$. If the action preserves a \gk\
structure $(\J_1,\J_2)$ and $\J_iK^\G=K^\G$ along $P$, where $K^\G= \Kperp \cap \G \Kperp$ is the $\G$-orthogonal complement of $K$ in $\Kperp$, then $\J_1$ and
$\J_2$ reduce to a \gks\ on $\E^{red}$.
\end{theo}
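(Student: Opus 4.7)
The plan is to deduce the generalized K\"ahler reduction from the SKT reduction already established, using the decomposition $\E = V_+ \oplus V_-$ induced by the generalized metric $\G = -\J_1\J_2$. Since each $\J_i$ commutes with $\G$ (as follows directly from $\G = -\J_1\J_2$ together with $\J_1\J_2 = \J_2\J_1$), both $\J_i$ preserve $V_\pm$. On $V_+$, where $\G = \Id$, the relation $\J_1\J_2 = -\Id$ combined with $\J_1^2 = -\Id$ forces $\J_1|_{V_+} = \J_2|_{V_+}$; on $V_-$, where $\G = -\Id$, the analogous computation gives $\J_1|_{V_-} = -\J_2|_{V_-}$. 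Thus the generalized K\"ahler structure encodes a pair of SKT structures on $\E$: one with complex structure $\J_1|_{V_+}$ on $V_+$, and one with complex structure $\J_1|_{V_-}$ on $V_-$ (in the $V_-$ convention noted at the end of Section \ref{sec:SKT}).

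Next I would observe that the hypothesis $\J_i K^\G = K^\G$ splits across the $V_\pm$ decomposition. Since $\G$ acts as $\pm\Id$ on $V_\pm$ and $V_\pm$ are mutually $\IP{\cdot,\cdot}$-orthogonal, a direct computation shows
\[
K^\G = \Kperp \cap \G\Kperp = (\Kperp \cap V_+) \oplus (\Kperp \cap V_-).
\]
Combined with $\J_i V_\pm = V_\pm$, the hypothesis becomes $\J_i(\Kperp \cap V_+) = \Kperp \cap V_+$ and $\J_i(\Kperp \cap V_-) = \Kperp \cap V_-$ along $P$. These are precisely the conditions of Theorem \ref{SKT reduction} and its $V_-$-analogue applied to the two SKT structures described above, so both reduce, producing integrable orthogonal complex structures $I^{red}_\pm$ on $V_\pm^{red}$, where $V_\pm^{red}$ are the $\pm 1$-eigenspaces of the reduced metric $\G^{red}$ obtained from Proposition \ref{prop:metric}.

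Finally I would assemble $\J_1^{red}$ and $\J_2^{red}$ on $\E^{red} = V_+^{red} \oplus V_-^{red}$ so as to reproduce the sign pattern noted above: both act as $I^{red}_+$ on $V_+^{red}$, while $\J_1^{red} = I^{red}_-$ and $\J_2^{red} = -I^{red}_-$ on $V_-^{red}$. Orthogonality, the complex structure identities $(\J_i^{red})^2 = -\Id$, and commutativity $\J_1^{red}\J_2^{red} = \J_2^{red}\J_1^{red}$ are immediate block-diagonal checks; integrability of each $+i$-eigenspace $L_i^{red}$ follows because $L_i^{red}$ decomposes into the integrable $+i$-eigenspaces on $V_\pm^{red}$ provided by the two SKT reductions; and $-\J_1^{red}\J_2^{red} = \G^{red}$ by construction, confirming that $(\J_1^{red},\J_2^{red})$ is a generalized K\"ahler structure on $\E^{red}$.

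The main obstacle I foresee is organizational rather than conceptual: one must verify that integrability of $L_i$ on $\E$ descends to integrability of $L_i^{red}$ on $\E^{red}$ using the same lift-and-project technique as in the proof of Theorem \ref{SKT reduction} (choose $G$-invariant lifts inside $L_i \cap K^\G \otimes \C$, extend arbitrarily to sections of $L_i$ on $\E$, take the Courant bracket, and project to $\E^{red}$), and the $V_-$-analogue of the SKT reduction must be invoked symmetrically. Once these pieces are in place, the whole argument reduces to two parallel applications of Theorem \ref{SKT reduction}.
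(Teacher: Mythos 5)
Your proposal is correct, but it routes through the theorem differently from the paper. The paper's proof is more direct: it uses the $\G$-orthogonal splitting $\Kperp = K \oplus K^\G$ over $P$ to identify $K^\G$ with $\E^{red}$, restricts both $\J_1$ and $\J_2$ wholesale to $K^\G$ (possible by the hypothesis $\J_i K^\G = K^\G$), observes that all pointwise identities ($\J_i^2=-\Id$, commutativity, orthogonality, $-\J_1\J_2=\G$ positive) survive restriction, and then proves integrability by the lift-and-project argument applied to the full $+i$-eigenspaces $L_i \subset \E\otimes\C$. You instead split everything along $V_\pm$, convert the hypothesis into $\J_i(\Kperp\cap V_\pm)=\Kperp\cap V_\pm$ via the identity $K^\G=(\Kperp\cap V_+)\oplus(\Kperp\cap V_-)$ (which the paper itself establishes in the proof of Proposition \ref{prop:metric}), and invoke Theorem \ref{SKT reduction} and its $V_-$-analogue twice before reassembling. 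This buys you reuse of an already-proved theorem for the algebraic part, and it makes transparent the link to the bihermitian picture that the paper only presents afterwards as the ``seen from $TM$'' viewpoint with the two transversals $\tau_\pm$. The one place where your strategy genuinely cannot be outsourced to the SKT theorem is integrability: Theorem \ref{SKT reduction} gives you closure of $L_i^{red}\cap V_+^{red}$ and $L_i^{red}\cap V_-^{red}$ under the bracket \emph{separately}, and closure of two summands does not imply closure of their sum (the bracket of a $V_+^{red}$-section with a $V_-^{red}$-section is not controlled). Your final paragraph correctly identifies the fix --- run the lift-and-project argument on the full eigenspace $L_i$ of $\J_i$ on $\E\otimes\C$ --- but at that point you are no longer applying Theorem \ref{SKT reduction}; you are reproducing the paper's integrability argument verbatim. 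With that step made explicit, the two proofs agree in substance, differing only in how the pointwise reduction of the $\J_i$ is organized.
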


The proof of the theorem relies on the fact that $K^\G$ furnishes a model for the reduced Courant algebroid and since $K^\G$ is invariant under $\J_1$ and $\J_2$ both of these structures give rise to \gcss\ on $\E^{red}$. All the pointwise conditions such as $\J_1^{red} \J_2^{red} = \J_2^{red} \J_1^{red}$ and the fact that $-\J_1^{red}\J_2^{red}$ is a metric follow from the same properties of $\J_i$, which still hold restricted to $K^{\G}$. The integrability of the reduced structures follows from the fact that the bracket on $\E^{red}$ is induced by the bracket on $\E$ and the $\J_i$ are integrable. In short, Figure \ref{fig:algebroid viewpoint} is still accurate, as $(K^{\G},\J_1|_{K^{\G}},\J_2|_{K^{\G}})$ gives a model for the \gks\ on $\E^{red}$.

\subsection{Generalized K\"ahler reduction as seen from $TM$}  From the point of view of the tangent bundle, a generalized K\"ahler structure is a bihermitian structure satisfying condition \eqref{eq:gk condition}. So, if an extended action preserves $g,b,I_{\pm}$ and $H$ we can try and perform the strong KT reduction of each of the strong KT structures separately, one using $V_+$ and other using $V_-$. So, in this setting, the condition for reduction is that the spaces $\tau_{\pm}$ defined by
\begin{equation}\label{eq:taupm}
\tau_{\pm} = \{Y \in TP: g(Y,X_{\gamma}) = \mp(b(Y,X_{\gamma}) +\xi_{\gamma}(Y))\}
\end{equation}
 are invariant by $I_{\pm}$, respectivelly.

\begin{figure}[h!]
\begin{center}
\unitlength 0.7mm
\begin{picture}(140,90)(0,0)
\linethickness{0.1mm}
\multiput(5,90)(0.12,-0.3){200}{\line(0,-1){0.3}}
\put(5,90){\line(1,0){111}}
\put(29,30){\line(1,0){111}}
\multiput(116,90)(0.12,-0.3){200}{\line(0,-1){0.3}}
\linethickness{0.1mm}
\qbezier(55,80)(78.91,67.34)(75.62,51.88)
\qbezier(75.62,51.88)(72.34,36.41)(70,35)
\linethickness{0.1mm}
\multiput(65,85)(0.12,-0.3){167}{\line(0,-1){0.3}}
\put(45,80){\makebox(0,0)[cc]{$G$-orbit}}

\put(65,85){\makebox(0,0)[cc]{$\psi(\frak{g})$}}

\put(110,70){\makebox(0,0)[cc]{$\psi(\frak{g})^{\perp}$}}

\put(125,85){\makebox(0,0)[cc]{$TP$}}

\linethickness{0.1mm}
\put(85,15){\line(0,1){10}}
\put(85,15){\vector(0,-1){0.12}}
\put(95,20){\makebox(0,0)[cc]{$q$}}

\put(130,15){\makebox(0,0)[cc]{$(TM^{red},g^{red},I_+^{red})   \cong (\tau_+,g,I_+)$}}
\put(130,5){\makebox(0,0)[cc]{$(TM^{red} ,g^{red},I_-^{red}) \cong (\tau_-,g,I_-)$}}

\linethickness{0.1mm}
\multiput(40,50)(0.36,0.12){167}{\line(1,0){0.36}}
\linethickness{0.3mm}
\put(45,10){\line(1,0){85}}
\linethickness{0.1mm}
\multiput(55,40)(0.12,0.14){292}{\line(0,1){0.14}}
\linethickness{0.1mm}
\multiput(45,70)(0.39,-0.12){167}{\line(1,0){0.39}}
\put(95,80){\makebox(0,0)[cc]{$\tau_+$}}

\put(115,50){\makebox(0,0)[cc]{$\tau_-$}}

\put(120,15){\makebox(0,0)[cc]{}}

\end{picture}
\caption{For the generalized K\"ahler quotient, there are two Riemannian models for $TM^{red}$ given by spaces $\tau_{\pm}$ transversal to the $G$-orbit. Each  of $\tau_{\pm}$ induces a complex structure on $TM^{red}$ making it into a bihermitian manifold.}
\label{fig:gk reduction}
\end{center}
\end{figure}
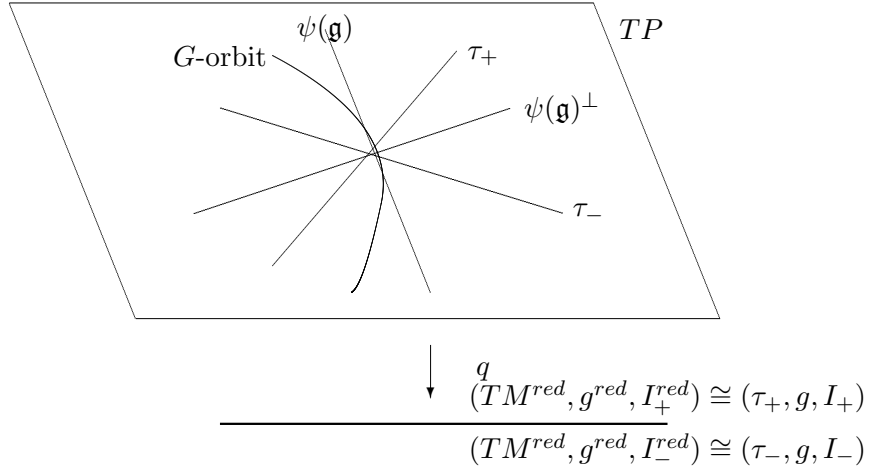

Due to Theorem \ref{gk reduction}, the  reduced strong KT structures pair up to form a \gk\ structure. This can also be seen from fact that the reduced SKT structures are both compatible with the reduced metric, but one is defined on $V_+^{red}$ and the other on $V_-^{red}$, hence, according to Corollary \ref{cor:curvature}
$$d^c_- \omega_- = - d^c_+\omega_+ = H^{red}, $$
where $H^{red}$ is the curvature of the metric splitting and $\omega_{\pm}$ are the K\"ahler forms of the reduced SKT structures.

\begin{cor}
Let $\E$, $\map$ and $P$ be as in Theorem \ref{SKT reduction} with  $K = \map(\frak{a})$ isotropic over $P$. If a \gks\ $(g,b,I_{\pm},H)$ is preserved by the action and the spaces $\tau_{\pm}$ defined by \eqref{eq:taupm} are invariant under $I_{\pm}$, respectively, then the \gk\ structure reduces. 
\end{cor}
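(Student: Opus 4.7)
The plan is to reduce the statement to Theorem \ref{gk reduction}, whose hypothesis is that $\J_i K^\G = K^\G$ along $P$ for $i=1,2$, where $K^\G = \Kperp \cap \G\Kperp$. Since $\G = -\J_1\J_2$, both $\J_1$ and $\J_2$ commute with $\G$ and therefore preserve its $\pm 1$-eigenspaces, so $K^\G$ splits as the $\G$-orthogonal sum $(V_+ \cap \Kperp) \oplus (V_- \cap \Kperp)$, and the required invariance of $K^\G$ decouples into the four conditions $\J_i(V_\pm \cap \Kperp) = V_\pm \cap \Kperp$.

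First I would use $\G = -\J_1 \J_2$ to halve the number of conditions. On $V_+$ the relation reads $\J_1 \J_2 = -\Id$, which combined with $\J_1^2 = -\Id$ forces $\J_2|_{V_+} = \J_1|_{V_+}$; on $V_-$ it reads $\J_1 \J_2 = \Id$ and hence $\J_2|_{V_-} = -\J_1|_{V_-}$. In either case $V_\pm \cap \Kperp$ is $\J_2$-invariant as soon as it is $\J_1$-invariant, so it suffices to establish the $\J_1$-invariance of each summand.

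Next I would invoke the tangent-bundle dictionary from the strong KT subsection: the isomorphism $\pi : V_\pm \to TM$ intertwines $\J_1|_{V_\pm}$ with $I_\pm$ and carries $V_\pm \cap \Kperp$ bijectively onto $\tau_\pm \subset TP$ as given by \eqref{eq:taupm}. Hence the $\J_1$-invariance of $V_\pm \cap \Kperp$ is equivalent to the $I_\pm$-invariance of $\tau_\pm$, which is precisely the hypothesis of the corollary. Combining the two paragraphs gives $\J_i K^\G = K^\G$ along $P$, so Theorem \ref{gk reduction} applies and produces the reduced \gks\ on $\E^{red}$. No new calculation is required; the main point to keep straight is simply the consistent identification of $V_\pm \cap \Kperp$ with $\tau_\pm$ under $\pi$, together with the observation that $\J_2$ is determined by $\J_1$ on each eigenspace of $\G$, after which the corollary is a direct translation of Theorem \ref{gk reduction}.
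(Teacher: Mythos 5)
Your proof is correct and follows essentially the same route as the paper: both rest on Theorem \ref{gk reduction} together with the identification $\tau_{\pm}=\pi(V_{\pm}\cap\Kperp)$ and the fact that $\J_1|_{V_{\pm}}$ corresponds to $I_{\pm}$. The only difference is that you verify the hypothesis $\J_iK^{\G}=K^{\G}$ explicitly via the decomposition $K^{\G}=(V_+\cap\Kperp)\oplus(V_-\cap\Kperp)$ and the relation $\J_2|_{V_{\pm}}=\pm\J_1|_{V_{\pm}}$, a step the paper leaves implicit (it instead performs the two strong KT reductions separately and notes that they pair up, either by Theorem \ref{gk reduction} or by Corollary \ref{cor:curvature} and the bihermitian characterization \eqref{eq:gk condition}).
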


\section{Examples}\label{sec:examples}

Now we consider how to use Theorem \ref{SKT reduction} in a sequence of down-to-earth examples.

\begin{example}\label{ex:general KT reduction} Let $(M,g,I)$ be a strong KT manifold and consider the split Courant algebroid $\mc{E} = (TM\oplus  T^*M,\IP{\cdot,\cdot},\Cour{\cdot,\cdot}_H)$, where $H = -d^{c}\omega$. According to Proposition \ref{SKT equivalent}, $(g,I)$ induces a strong  KT structure $(\mc{G},\mc{I})$ on $\mc{E}$, where the 1-eigenspace of $\G$ is the graph of $g$.  Assume we have an extended action $\map:\frak{a} \into \Gamma(TM\oplus T^*M)$ preserving $\mc{G}$, $\mc{I}$ and the splitting, i. e., if $\map(\ga) = X_{\ga} + \xi_{\ga}$ then
\begin{equation}\label{eq:SKT reduction conditions}\begin{cases}
\mc{L}_{X_{\ga}} g =0\\
\mc{L}_{X_{\ga}} I= 0\\
i_{X_{\ga}}H = -d\xi_{\ga}
\end{cases}\qquad \mbox{for all } \ga \in \frak{a}.
\end{equation}

And also assume that $K$ is isotropic over $P$, so the metric reduces to a metric in the reduced algebroid.

For $\ga \in \frak{a}$, define 
$$\tilde{\xi_{\ga}} = g(X_{\ga}) + \xi_{\ga}.$$
Since $V_+ = \{X + g(X)|X \in TM\}$, we have
$$\Kperp \cap V_+ = \{X + g(X)| \tilde{\xi_{\ga}}(X) = 0~~\forall \ga \in \frak{a}\}$$
Therefore the strong KT structure reduces \iff\ 
\begin{equation}\label{dnu and dcnu}
\Ann\{\tilde{\xi_{\ga}}\} \mbox{ is closed under } I.
\end{equation}
Sumarizing, if an extended action satisfies \eqref{eq:SKT reduction conditions} and $K$ is isotropic and \eqref{dnu and dcnu} holds over a leaf $P$ of the distribution $\Delta_b$, then the strong KT structure reduces.
\end{example}

\begin{example}\label{ex:GPP} A particular case of this example was treated by Grantcharov, Papadopoulos and Poon in \cite{GPP02}, where they start with an isotropic trivially extended action
$$
\xymatrix{\frak{g}\ar[r]^{\Id}\ar[d]^{\tilde{\map}}&\frak{g}\ar[d]^{\psi}\\
\Gamma(\E)\ar[r]^{\pi}& \Gamma(TM)}$$
given by
$$\tilde{\map}(\gamma) = X_{\gamma} - g(X_{\gamma}) + d^{c}\IP{\mu,\gamma},$$
where $\mu:M \into \frak{g}^*$ is an equivariant map and such that the action satisfies \eqref{eq:SKT reduction conditions}. Then they use $\mu$ as a moment map for this action, creating in fact an action of the Courant algebra $\frak{a} = \frak{g} \oplus \frak{g}$ with the hemisemidirect product  structure (c.f. Example \ref{ex:hemisemi}) and $\map:\frak{a} \into \E$, as in Example \ref{ex:momented}, given by
\begin{equation}\label{eq:SKT red moment}
\map(\gamma,\lambda) = X_{\gamma} - g(X_{\gamma}) + d^{c}\IP{\mu,\gamma} +d\IP{\mu,\lambda}.
\end{equation}
With these choices, the $\tilde{\xi}_{\alpha}$ in from Example \ref{ex:general KT reduction} are nothing but $d^c\mu$ and $d\mu$, hence they annihilate a space invariant under $I$. Also, according to Example \ref{ex:momented}, $K$ is isotropic over $\mu^{-1}(0)$. Therefore, the quotient of $\mu^{-1}(0)$ by the group action  has a strong KT structure.
\end{example}

In the next example we recover the result of L\"ubke and Teleman stating that the moduli space of instantons over a SKT manifold has an SKT structure on its smooth points \cite{LuTe95}. From our perspective, this example is particularly interesting as it illustrates in a concrete case many different features of the reduction procedure: the extended action is given by vectors and forms together, the moment map is not determined by the geometry on $M$ (but just by the conformal class of the metric $g$) and further, the moment map does not have values on $\Gg^*$, but in a $\Gg$-module.

\begin{example}[SKT structure on the moduli space of instantons]\label{ex:SKT instantons} 
Let $\mc{A}$ be the space of metric connections on a vector bundle $E$ over $M$ and consider the action of the gauge group on $\mc{A}$. 
The Lie algebra of the gauge group is given by $\frak{g} \cong \Omega^0(M,\End(E))$, the tangent space to $\mc{A}$ at a connection $A$ is given by $T_A\mc{A} \cong \Omega^{1}(M,\End(E))$ and the infinitesimal action of the gauge group is given by the map
$$\psi:\frak{g} \into \Gamma(T\mc{A}); \qquad \gamma \stackrel{\psi}{\mapsto} d_A \gamma \in T_A\mc{A}.$$
Our assumption that the action is free and proper and that $\psi$ is an injection means that $d_A$ has no kernel for any connection $A$. These assumptions only hold if the gauge group is simple and no connection renders the bundle $E$ decomposable as a sum of bundles of lower rank with connetions.

The strong KT structure on $M$ induces a strong KT structure on $\mc{A}$ (it is actually a K\"ahler structure) given by following complex structure and Hermitian metric
\begin{equation}\label{eq:complex structure and metric}
\begin{cases}
\mathbb{I} X &= -I^* X,\\
\mathbb{G}(X,Y) &= \int_M\IPP{X \wedge \star Y} = -\int_M\IPP{X \wedge I^*Y}\wedge \omega,
\end{cases}\qquad \forall X,Y \in  \Omega^{1}(M,\End(E))
\end{equation}
where, $\star$ is the Hodge operator,  $\omega = g(I\cdot,\cdot)$ the K\"ahler form associated to $(g,I)$. We identify $T_A^*\mc{A}$ with $\Omega^3(M,\End(E))$, using the integral over $M$ and trace operator. Then we let $\tilde{\map}:\frak{g} \into \Gamma(T\mc{A}\oplus T^*\mc{A})$ be the  trivial extension of the gauge action given by
$$\tilde{\map}(\gamma) = d_A \gamma - H  \gamma.$$
A simple integration by parts shows that this action is isotropic:
\begin{align*}
\IP{\tilde{\map}(\gamma),\tilde{\map}(\gamma)}|_A & =\int_M \IPP{(d_A \gamma) \gamma} \wedge H\\
&=\tfrac{1}{2} \int_M d(\IPP{\gamma}^2) \wedge H =0
\end{align*}

Now, as in Example \ref{ex:momented}, we extend this action using the following equivariant map as a moment map
$$\mu:\mc{A} \into \Omega^2_+(M,\End(E)):=\Hh^*;   \qquad  \mu(A) = F_A^+,$$
i. e., $\mu(A)$ is the self dual part of the curvature of the connetion $A$. Therefore we  obtain an extended action of the hemisemidirect product $\frak{g}\oplus \Hh$ on $T\mc{A} \oplus T^*\mc{A}$.

In the presence of the complex structure $I$, the $\Gg$-module $\Hh^*= \Omega^2_+(M,\End(E))$ decomposes as a sum $\Hh_1 \oplus \Hh_2$, with $\Hh_1^* = \Omega^{2,0+0,2}(M,\End(E))$ and $\Hh_2^*=  \omega\wedge \Omega^0(M,\End(E))$, so we can perform the reduction in two steps. First we restrict to $\mc{A}^{1,1} = \mu_1^{-1}(0)$, i. e., the space of connections with curvature of type $(1,1)$ and then perform a further reduction by the action
$$\map:\Gg\oplus \Hh_2 \into \Gamma(T\mc{A}^{1,1}\oplus T^*\mc{A}^{1,1}),\qquad \map(\gamma,\lambda) =  d_A \gamma - H  \gamma + d\IP{\mu_2,\lambda}.$$

As it is, $\mc{A}^{1,1}$ is a complex submanifold  of $(\mc{A},\mathbb{I})$ and inherits a strong KT structure from $\mc{A}$. So, the final reduced manifold and Courant algebroids are just the reduction of the strong KT structure on $\mc{A}^{1,1}$ by the action $\map$.

We claim that this extended action fits precisely in the setting of Example \ref{ex:GPP}, so that the reduction by the action of $\frak{g}\oplus \frak{g}$ gives a SKT structure to the quotient of $\mu^{-1}(0)$. In order to prove our claim we observe that by wedging with $\omega$ we get the following identification of $\Gg$-modules
$$\Hh_2^* \cong \omega^2 \wedge \Omega^0(M,\End(E)) \cong \Gg^*$$
So, in order to show that we are in same conditions of Example \ref{ex:GPP}, we only need to check that
$$d^c\IPP{\mu_2 \wedge \gamma} - \mathbb{G}(d_A\gamma) = -H \gamma.$$
To prove this we pick an arbitrary $X \in \Omega^1(M,\End(E))$ and show that 
$$\int_Md^c\IPP{\mu_2  \gamma}(X)-\mathbb{G}(d_A\gamma,X) = \int_M \IPP{X  \gamma}H   .$$
We start expanding the left hand side:
\begin{align*}
\int_M d^c\IPP{\mu_2  \gamma}(X) - \mathbb{G}(d_A\gamma,X)& = \mc{L}_{-I^*X}\int_M\IPP{F_A  \gamma}\omega +\int_M\IPP{d_A \gamma \wedge I^*X}\wedge \omega\\
& = \int_M -\IPP{d_A I^*X  \gamma} \omega -\IPP{I^*X \wedge d_A \gamma}\wedge \omega\\
& = \int_M \IPP{I^*X \wedge d_A\gamma} \omega  + \IPP{I^*X  \gamma}d \omega - \IPP{I^*X \wedge d_A \gamma}\wedge \omega\\
& = \int_M \IPP{X  \gamma}\wedge(-I^* d\omega)\\
& = \int_M \IPP{X \gamma}\wedge H,\\
\end{align*}
where in the first equality we used that $d^c = \mathbb{I}^*~d$ and the alternative expression for the metric in \eqref{eq:complex structure and metric}, in the second equality we took the Lie derivative of the first term. The third equality follows by integration by parts of the first term, the fourth from the implicit assumption that $M$ is oriented with the orientation induced by $I$ and the last, from $H = -d^c\omega$.

Therefore we have shown that $\mu^{-1}(0)/G$, the moduli space of instantons, has a strong KT structure. 

We can further compute $d^c\omega$ for this SKT structure using Corolary \ref{cor:curvature} and Proposition \ref{prop:metric curvature}. Since $T\mc{A}\oplus T^*\mc{A}$ in endowed with the standard Courant bracket, the curvature of the metric splitting of the reduced algebroid is given by
$$-d^c\omega = H^{red} = \IP{F,\xi}|_{\tau_+},$$
where $F$ is the curvature of the connection whose horizontal  distribution is $\tau_+$. If the original structure is  K\"ahler, i. e., $H=0$, then $\xi =0$ and the reduced SKT structure obtained above is also K\"ahler.

\end{example}

\begin{example}[HKT structure on the moduli space of instantons]\label{ex:HKT instantons}
The same argument used above, with the same action and moment map $\mu$ shows that if $(M,g,I,J,K)$ is a hyper KT manifold, then $(g,I,J,K)$ induces a hyper KT structure on the moduli space of instantons. The only fact one has to use is that the final reduced structures are independent of the particular way we decompose the $\Gg$-module $\Hh$ and hence the moment map as $\mu = \mu_1 + \mu_2$. If one follows the argument of Example \ref{ex:SKT instantons}, each of $I$, $J$ and $K$ gives a different decomposition of $\frak{h}$ for which it is obvious the corresponding strong KT structure reduces and hence, according to Corollary \ref{HKT  reduction}, they induce an HKT structure on the moduli space of instantons. If $H=0$, and hence $M$ is hyper-K\"ahler, the same argument from Example \ref{ex:SKT instantons} shows that the reduced HKT structure has vanishing torsion and therefore is also hyper-K\"ahler.
\end{example}

\begin{example}[HKT reduction with moment maps]
The standard way to reduce HKT structures found in the literature mimics hyper-K\"ahler reduction and involves a $G$-action together with a moment map with values on $\Gg^*\oplus \Gg^* \oplus \Gg^*$, $\mu = (\mu_I,\mu_J,\mu_K)$ such that 
\begin{enumerate}
\item $d^c_I \mu_I = d^c_J \mu_J =  d^c_K \mu_K$
\item The map $\tilde{\map}:\Gg \into \Gamma(TM\oplus T^*M)$ given by
$$\tilde{\map}(\gg) = \psi(\gg) -g(\psi(\gg)) + d^c_I\mu_I$$
is an isotropic  trivially extended action preserving the splitting $TM \oplus T^*M$ and the HKT structure.
\end{enumerate}
If these conditions hold, we can extend $\tilde{\map}$ to an extended action $\map$ with moment  map  $\mu$, as in Example \ref{ex:momented}. In this case, one can easily see that each strong KT structure reduces, using Example \ref{ex:general KT reduction}. For instance, to check that $(g,I)$ reduces we have to check that the space annihilated by $\{d^c_I\mu_I,d\mu_I,d \mu_J, d \mu_K\}$ is $I$-invariant, but
$$I d\mu_J =  -I J Jd \mu_J = -K d^c_J \mu_J =  -K d^c_K \mu_K = d \mu_K,$$  
showing that $\{d^c_I\mu_I,d\mu_I,d \mu_J, d \mu_K\}$ is $I$-invariant.

Example \ref{ex:HKT instantons} can be seen in the light of an HKT reduction using a moment map with values in $\Gg^*\oplus \Gg^*\oplus \Gg^*$, as done in this example, however by phrasing it this way one might be led to think that the moment map is determined by the full HKT structure, which is not the case, so in this particular case  this approach obscures the picture.
\end{example}

\begin{example}[Generalized K\"ahler structure on the moduli space of instantons]\label{ex:hitchin}
Using the bihermitian point of view, Hitchin proved that the moduli space of instantons over a generalized K\"ahler manifold for which $I_{\pm}$ determine the same orientation has a \gk\ structure \cite{Hi05}. His argument essentially involved two SKT reductions one for each structure, as pictured in Figure \ref{fig:gk reduction} and some work afterwards to prove integrability of the reduced structures.

From our point of view, we can prove this by introducing the following \gks\ on the space of connections $\mc{A}$ on the bundle $E$
\begin{equation}\label{eq:gk structure}
\begin{cases}
\mathbb{I}_+ X &= -I_+^* X,\\
\mathbb{I}_- X &= I_-^* X,\\
\mathbb{G}(X,Y) &= \int_M\IPP{X\wedge \star Y} = -\int_M\IPP{X \wedge  I_{\pm}^*Y}\wedge \omega_{\pm},
\end{cases}\qquad \forall X,Y \in  \Omega^{1}(M,\End(E))
\end{equation}
So that the extended action from Example \ref{ex:SKT instantons} is such that the strong KT structure $(\mathbb{G},\mathbb{I}_+)$ reduces and similar computations also show that $\mathbb{I}_-$ reduces giving rise to a \gk\ pair on $\mu^{-1}(0)$/G.

It is interesting to observe that, similarly to the case of reduction of HKT structures, the decomposition of the moment map $\mu(A) = F_A^+$ in two components used in the argument is different for $I_{\pm}$, however the actual reduction is independent of the choice of intermediate steps.

A direct proof that the \gk\ structure on $\mc{A}$ reduces using Theorem \ref{gk reduction} is given in \cite{BCG07b}
\end{example}


\begin{thebibliography}{10}

\bibitem{BCG05}
{\sc H.~Bursztyn, G.~R. Cavalcanti, and M.~Gualtieri}.
\newblock Reduction of {C}ourant algebroids and generalized complex structures.
\newblock {\it Adv. Math.} 211(2), 726--765, 2007.

\bibitem{BCG07b}
{\sc H.~Bursztyn, G.~R. Cavalcanti, and M.~Gualtieri}.
\newblock Generalized {K}\"ahler geometry of instanton moduli spaces.
\newblock 2012.

\bibitem{BCG07}
{\sc H.~Bursztyn, G.~R. Cavalcanti, and M.~Gualtieri}.
\newblock Generalized K\"ahler and hyper-K\"ahler quotients.
\newblock {\it Poisson geometry in mathematics and physics}, 61Ð77, Contemp. Math., 450, Amer. Math. Soc., Providence, RI, 2008.

\bibitem{FiSt94}
{\sc J.~M. Figueroa-O'Farrill and S.~Stanciu}.
\newblock Gauged {W}ess-{Z}umino terms and equivariant cohomology.
\newblock {\em Phys. Lett. B}, 341(2):153--159, 1994.

\bibitem{GPP02}
{\sc G.~Grantcharov, G.~Papadopoulos, and Y.~S. Poon}.
\newblock Reduction of {HKT}-structures.
\newblock {\em J. Math. Phys.}, 43(7):3766--3782, 2002.

\bibitem{Gu03}
{\sc M.~Gualtieri}.
\newblock {\em Generalized Complex Geometry}.
\newblock D.Phil. thesis, Oxford University, 2003.
\newblock math.DG/0401221.

\bibitem{Hi05}
{\sc N.~Hitchin}.
\newblock Instantons, {P}oisson structures and generalized {K}\"ahler geometry.
\newblock {\em Comm. Math. Phys.}, 265:131--164, 2006.

\bibitem{KiWe01}
{\sc M.~K. Kinyon and A.~Weinstein}.
\newblock Leibniz algebras, {C}ourant algebroids, and multiplications on
  reductive homogeneous spaces.
\newblock {\em Amer. J. Math.}, 123(3):525--550, 2001.

\bibitem{Lod93}
{\sc J.-L. Loday}.
\newblock Une version non commutative des alg\`ebres de {L}ie: les alg\`ebres
  de {L}eibniz.
\newblock In {\em R.C.P.\ 25, Vol.\ 44 (French) (Strasbourg, 1992)}, volume
  1993/41 of {\em Pr\'epubl. Inst. Rech. Math. Av.}, pages 127--151. Univ.
  Louis Pasteur, Strasbourg, 1993.

\bibitem{LuTe95}
{\sc M.~L{\"u}bke and A.~Teleman}.
\newblock {\em The {K}obayashi-{H}itchin correspondence}.
\newblock World Scientific Publishing Co. Inc., River Edge, NJ, 1995.

\bibitem{SeWe01}
{\sc P.~{\v{S}}evera and A.~Weinstein}.
\newblock Poisson geometry with a 3-form background.
\newblock {\em Progr. Theoret. Phys. Suppl.}, 144:145--154, 2001.
\newblock Noncommutative geometry and string theory (Yokohama, 2001).

\bibitem{Wi04}
{\sc F.~Witt}.
\newblock {\em Special metric structures and closed forms}.
\newblock D.Phil. thesis, Oxford University, 2004.
\newblock math.DG/0502443.

\end{thebibliography}
\end{document}